\theoremstyle{theorem}
\newtheorem{theorem}{Theorem}
\newtheorem{proposition}[theorem]{Proposition}
\newtheorem{lemma}[theorem]{Lemma}
\newtheorem{corollary}[theorem]{Corollary}
\theoremstyle{definition}
\newtheorem{remark}[theorem]{Remark}
\newtheorem{algorithm_thm}[theorem]{Algorithm}
\newtheorem{example}[theorem]{Example}
\newcommand{\PP}{\mathbb{P}}
\newcommand{\RR}{\mathbb{R}}
\newcommand{\QQ}{\mathbb{Q}}
\newcommand{\CC}{\mathbb{C} }
\newcommand{\ZZ}{\mathbb{Z}}
\newcommand{\NN}{\mathbb{N}}
\newcommand{\e}{\epsilon}
\title{\textbf{KP Solitons from Tropical Limits}}
\author{Daniele Agostini, Claudia Fevola, Yelena Mandelshtam and  Bernd Sturmfels}
 \date{ }
\begin{document}

 \maketitle

\begin{abstract}
\noindent 
We study solutions to the Kadomtsev-Petviashvili equation 
whose underlying algebraic curves undergo tropical degenerations.
Riemann's theta function becomes a finite 
exponential sum that is supported on a Delaunay polytope.
We introduce the Hirota variety which parametrizes all 
tau functions arising from such a sum.
We compute tau functions from points on the Sato
Grassmannian that represent 
Riemann-Roch~spaces and we present an algorithm that finds a soliton solution from
a rational nodal curve.
\end{abstract}

\section{Introduction}
\label{sec1}

In the interplay between integrable systems and algebraic geometry \cite{AG,
agostini&co, dubrovin, KX, Krichever, Naktrig, Nak},
the study of complex algebraic curves is  connected to the
 {\em Kadomtsev-Petviashvili (KP)~equation} 
\begin{equation}
\label{eq:KP1}
 \frac{\partial}{\partial x}\left( 4 p_t - 6 p p_x - p_{xxx} \right)\, = \, 3 p_{yy}.
 \end{equation}
This is a heavily studied nonlinear partial differential equation
for an unknown function $p(x, y, t)$ that describes the evolution of certain water
waves.

Given a smooth complex algebraic curve, we write $B$ for its Riemann matrix,
normalized to have negative definite real part. One considers the associated Riemann theta function
\begin{equation}
\label{eq:RTFreal}
\theta \,=\, \theta({\bf z}\, |\, B)\,\,\, = \,\,\,
\sum_{{\bf c} \in \mathbb{Z}^g} {\rm exp} \left[ \frac{1}{2} {\bf c}^T B {\bf c} \, + {\bf c}^T {\bf z} \right].
\end{equation}
Krichever \cite{Krichever} constructed $g$-phase solutions  to the KP equation as follows. Let $\tau(x,y,t)$ be obtained from (\ref{eq:RTFreal}) by setting
${\bf z} =  {\bf u} x + {\bf v} y + {\bf w} t$.
Here, ${\bf u} = (u_1,\ldots,u_g)$,
${\bf v} = (v_1,\ldots,v_g)$,
${\bf w} = (w_1,\ldots,w_g)$ are coordinates on
the weighted projective space $\mathbb{WP}^{3g-1}$ that is defined~by
\begin{equation}
\label{eq:grading}
 {\rm deg}(u_i) = 1 \,, \,\,\,
{\rm deg}(v_i) = 2 \, , \,\,\,
{\rm deg}(w_i) = 3 \quad {\rm for} \,\,\, i = 1,2,\ldots, g. 
\end{equation}
We require that the trivariate tau function $\tau(x,y,t)$ satisfies Hirota's differential equation
 \begin{equation}\label{eq:hirotaF}
\tau \tau_{xxxx} \,-\,4\tau_{xxx}\tau_x \,+\, 3\tau_{xx}^2
\,\,\,+\,\,\,4\tau_{x}\tau_t\,-\, 4 \tau \tau_{xt}\,+\,3\tau \tau_{yy}-3\tau_y^2
\,\,\,= \,\,\,0.
\end{equation}
Under this hypothesis, the following function satisfies (\ref{eq:KP1}),
and we call it the {\em KP solution}:
\begin{equation}
\label{eq:u_tau}
 p(x,y,t) \,\,=\,\, 2\frac{\partial^2}{\partial x^2} \log \tau(x,y,t).
\end{equation}
The {\em Dubrovin threefold} of~\cite{agostini&co} comprises all
points $({\bf u},{\bf v},{\bf w})$ in $\mathbb{WP}^{3g-1}$
for which  (\ref{eq:hirotaF}) holds.

In the present paper we study these objects when
the smooth curve is defined over~a non-archimedean field  $\mathbb{K}$ such as 
$\QQ(\epsilon)$ or the
Puiseux series $\CC \{ \! \{ \epsilon \} \! \}$.
The Riemann matrix~$B_\epsilon$ depends analytically on the parameter $\epsilon$,
and hence so do the tau function and  KP~solution. For
$\epsilon \rightarrow 0$, the function $p(x,y,t)$ becomes
a soliton solution of (\ref{eq:KP1}). Our aim is to compute these degenerations
and resulting KP solitons  \cite{AG, Kodamabook} explicitly using computer~algebra.

In the tropical limit, the infinite sum over $\ZZ^g$ in the theta function becomes a finite sum
\begin{equation}
\label{eq:thetafunction}
 \theta_\mathcal{C} ({\bf z}) \,\, = \,\,
a_1 \,{\rm exp}[ \,{\bf c}_1^T  {\bf z} \,] \, + \,
a_2 \,{\rm exp}[ \,{\bf c}_2^T {\bf z} \, ] \, + \,\cdots \,+\,
a_m \,{\rm exp}[ \,{\bf c}_m^T {\bf z} \,] ,
\end{equation}
where $\mathcal{C} = \{{\bf c}_1,{\bf c}_2,
\ldots,{\bf c}_m\}$ is a certain subset of the integer lattice $\ZZ^g$.
Each lattice point ${\bf c}_i = (c_{i1},\ldots,c_{ig})$ specifies
a linear form
${\bf c}_i ^T {\bf z} = \sum_{j=1}^g c_{ij} z_j$, just like in
(\ref{eq:RTFreal}).
The coefficients ${\bf a} = (a_1,a_2,\ldots,a_m)$ 
are unknowns that serve  as coordinates on the algebraic torus $(\mathbb{K}^*)^m$.

\begin{example}[$g=2$] \label{ex:ClaudiaYelena}
Consider a genus two curve $y^2 = f_i(x)$ where
$f_i(x)$ is a polynomial of degree six with coefficients in $ \QQ(\epsilon)$.
Here are two instances corresponding 
to Figures \ref{trees} and~\ref{graphs}:
\begin{equation}
\label{eq:twocurves} \begin{matrix}
f_1(x) & = & (x-1)(x-2\e)(x-3\e^2)(x-4\e^3)(x-5\e^4)(x - 6\e^5), \\
f_2(x) & = & (x-1)(x-1-\e)(x-2)(x-2-\e)(x-3)(x-3-\e).
\end{matrix}
\end{equation}
Note that $f_2$ is an example for \cite[\S 7]{Nak}.
For any fixed $\epsilon \in \CC^*$, we can compute
the Dubrovin threefold in $\mathbb{WP}^5$, using 
\cite[Theorem 3.7]{agostini&co}, and derive
KP solutions from its points. The difficulty is
to maintain $\epsilon$ as a parameter 
and to understand what happens for $\epsilon \rightarrow 0$.

One configuration in $\ZZ^2$ that arises here is the square
$\,\mathcal{C} = \{ (0,0), (1,0),(0,1),(1,1) \} $. In order for 
the associated theta function $\,\theta_\mathcal{C}  =
 a_{00} + a_{10}\, {\rm exp}[z_1] + 
a_{01} \,{\rm exp}[z_2] + a_{11}\, {\rm exp}[z_1+z_2]\, $
to yield a KP solution, 
the following three polynomial identities are necessary and sufficient:
$$
\begin{matrix}
    u_1^4 - 4 u_1 w_1 + 3 v_1^2   \,=\, 0,&
        \quad ((u_1{+}u_2)^4 - 4 (u_1{+}u_2) (w_1{+}w_2)  
            + 3 (v_1{+}v_2)^2 )  \,a_{00} a_{11}  \\  
  u_2^4 - 4 u_2 w_2 + 3 v_2^2    \,=\,  0, &  \qquad \,\, \,+\,
      ((u_1{-}u_2)^4 - 4 (u_1{-}u_2) (w_1{-}w_2) + 3 (v_1{-}v_2)^2 )\,  a_{01} a_{10} \,=\, 0.
\end{matrix}              $$
If these conditions hold then $p(x,y,t)$ can be
written as a $(2,4)$-soliton by
\cite[\S 2.5]{Kodamabook}.
\end{example}

This article is organized as follows.
In Section \ref{sec2} we review the derivation of tropical Riemann matrices.
Theorem~\ref{degthetathm} characterizes
degenerations of theta functions
from algebraic curves over $\mathbb{K}$.
 Proposition \ref{prop:delaunaypolytopes}
shows that $\mathcal{C} $ is the vertex set of
a Delaunay polytope in $\ZZ^g$.
In Section \ref{sec3} we study the Hirota variety $\mathcal{H}_\mathcal{C}$, which lives in
$ (\mathbb{K}^*)^m \times \mathbb{WP}^{3g-1}$.
A point $({\bf a},({\bf u}, {\bf v},{\bf w}))$ lies on the Hirota variety if and only if
(\ref{eq:hirotaF}) holds for (\ref{eq:thetafunction}).
We saw $\mathcal{H}_\mathcal{C}$ for $g=2$ and $\mathcal{C} = \{0,1\}^2$ in Example~\ref{ex:ClaudiaYelena}.
Theorem~\ref{thm:khovanskii}
 characterizes  the Hirota variety of the $g$-simplex.

A key idea in this paper is to never compute
a Riemann matrix or the theta function~(\ref{eq:RTFreal}).
Instead we follow the approach in \cite{KX,Naktrig,Nak}
that rests on the  Sato Grassmannian (Theorem \ref{thm:sgm}).  This setting
is entirely algebraic and hence amenable to symbolic computation over $\mathbb{K}$.
Section \ref{sec4} explains the computation of
tau functions from points on the Sato Grassmannian.

In Section \ref{sec5} we start from an algebraic curve $X$ over $\mathbb{K}$. 
Certain Riemann-Roch spaces on $X$
are encoded as points on the Sato Grassmannian.
Following \cite{Nak}, we present an algorithm and its 
\verb|Maple| implementation for 
computing these points and the resulting tau functions, for $X$  hyperelliptic.
Proposition~\ref{prop:cond1and2}
addresses the case when $X$ is reducible.
This is followed up in Section \ref{sec6},
where we present  Algorithm  \ref{alg:sec6}  for 
KP solitons from nodal rational curves.

\section{Tropical Curves and Delaunay Polytopes}
\label{sec2}

We work over a field $\mathbb{K}$ of characteristic zero with a non-archimedean valuation.
Let $X$ be a {\em Mumford curve} of genus $g$, that is,
$X$ is a smooth curve over $\mathbb{K}$ whose Berkovich analytification is a graph 
with $g$ cycles. This metric graph is the tropicalization
${\rm Trop}(X)$ of a faithful embedding of $X$. 
In spite of the recent advances in \cite{jell}, computing
${\rm Trop}(X)$ from $X$ remains a nontrivial task. All our
examples were derived with
methods described in~\cite{brandt}.

If the curve $X$ is hyperelliptic, given
by an equation $y^2 = f(x)$, then
${\rm Trop}(X)$ is a metric graph with a harmonic two-to-one map onto
  the phylogenetic tree encoding the roots of~$f(x)$. The combinatorics
  of harmonic maps is subtle.   We refer
   \cite[Definition 2.2]{bolognese} for an explanation.

\begin{example}[$g=2$] \label{ex:g2} Let $f(x)$ be of degree six. The six roots
determine a subtree with six leaves in the Berkovich line.
The edge lengths are invariants
of the semistable model \cite{brandt} over
the valuation ring of $\mathbb{K}$.
There are two combinatorial types of trivalent trees
with six leaves, the {\em caterpillar}
and the {\em snowflake}. These are realized by the two polynomials in
Example~\ref{ex:ClaudiaYelena}.

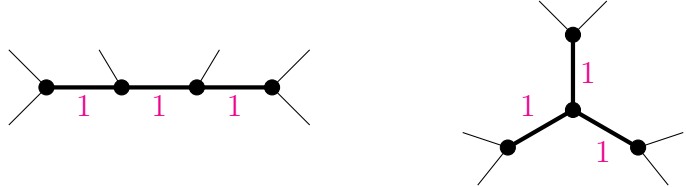
\begin{figure}[ht]
	\centering
	\begin{tikzpicture}
	\draw[fill] (1,0) circle [radius=0.1];
	\draw[fill] (2,0) circle [radius=0.1];
	\draw[fill] (3,0) circle [radius=0.1];
	\draw[fill] (4,0) circle [radius=0.1];
	\draw[ultra thick] (1,0)--(4,0);
	\draw[thin] (.5, 0.5)--(1, 0);
	\draw[thin] (.5, -0.5)--(1, 0);
	\draw[thin] (1.7, 0.5)--(2, 0);
	\draw[thin] (3.3, .5)--(3, 0);
	\draw[thin] (4.5, .5)--(4, 0);
	\draw[thin] (4.5, -.5)--(4, 0);
	\node[color=magenta] at (1.5,-.25) {1};
	\node[color=magenta] at (2.5,-.25) {1};
	\node[color=magenta] at (3.5,-.25) {1};
	
	\draw[fill] (8,-.3) circle [radius=0.1];
	\draw[fill] (8,.7) circle [radius=0.1];
	\draw[fill]({8-sqrt(3)/2},{-.3-1/2}) circle [radius=0.1];
	\draw[fill]({8+sqrt(3)/2},{-.3-1/2}) circle [radius=0.1];
	\draw[ultra thick]({8-sqrt(3)/2},{-.3-1/2})--(8,-.3)--(8,.7);
	\draw[ultra thick]({8+sqrt(3)/2},{-.3-1/2})--(8,-.3);
	\draw[thin]({8-sqrt(3)/2},{-.3-1/2})--({8-sqrt(3)/2 - .6 },{-.3-1/2+.2});
	\draw[thin]({8-sqrt(3)/2},{-.3-1/2})--({8-sqrt(3)/2 - .4 },{-.3-1/2-.5});
	\draw[thin](8,.7) -- (8.45, 1.15);
	\draw[thin](8,.7) -- (7.55, 1.15);
	\draw[thin] ({8+sqrt(3)/2},{-.3-1/2}) -- ({8+sqrt(3)/2 +.6},{-.3-1/2 +.2});
	\draw[thin] ({8+sqrt(3)/2},{-.3-1/2}) -- ({8+sqrt(3)/2 +.4},{-.3-1/2 -.5});
	\node[color=magenta] at (8.2,.2) {1};
	\node[color=magenta] at (8.4,-.85) {1};
	\node[color=magenta] at (7.4,-.25) {1};
	\end{tikzpicture}
	\caption{The metric trees defined by the polynomials $f_1$ (left) and $f_2$ (right) in (\ref{eq:twocurves})}
	\label{trees}
\end{figure}


Each trivalent metric tree with $2g+2$ leaves has a unique hyperelliptic covering
by a metric graph of genus $g$.
This is the content of \cite[Lemma 2.4]{bolognese}.
 The edge lengths of the genus $g$ graph are obtained from those
of the tree by stretching or shrinking with a factor of $2$.
   Figure \ref{graphs} shows the graphs that give a two-to-one map to 
   the trees in Figure \ref{trees}.

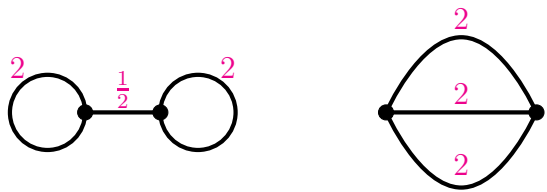
\begin{figure}[ht]
\centering
\begin{tikzpicture}
\draw[fill] (1,0) circle [radius=0.1];
\draw[fill] (2,0) circle [radius=0.1];
\draw[ultra thick] (0.5, 0) circle [radius = 0.5];
\draw[ultra thick] (2.5, 0) circle [radius = 0.5];
\draw[ultra thick] (1,0)--(2,0);
\node[color=magenta] at (.1,.6) {2};
\node[color=magenta] at (2.9,.6) {2};
\node[color=magenta] at (1.5,.3) {$\frac{1}{2}$};

\draw[fill] (5,0) circle [radius=0.1];
\draw[fill] (7,0) circle [radius=0.1];
\draw[ultra thick] (5,0) to (7,0);
\draw [ultra thick] plot [smooth, tension=1] coordinates { (5,0) (6,1) (7,0)};
\draw [ultra thick] plot [smooth, tension=1] coordinates { (5,0) (6,-1) (7,0)};
\node[color=magenta] at (6,.25) {2};
\node[color=magenta] at (6,1.25) {2};
\node[color=magenta] at (6,-.7) {2};

\end{tikzpicture}

	\caption{The metric graphs  ${\rm Trop}(X)$ for the 
		curves $X$ in Example~\ref{ex:ClaudiaYelena} and Figure \ref{trees}}
	\label{graphs}
\end{figure}


\end{example}

From the tropical curve ${\rm Trop}(X)$ we can read off
the tropical Riemann matrix $Q$. This is a 
positive definite real symmetric $g \times g$ matrix.
Fix a basis of cycles in ${\rm Trop}(X)$ and write them
as the $g$ rows of a matrix $\Lambda$ whose columns are
labeled by the edges. Let $\Delta$ be the diagonal matrix
whose entries are the edge lengths of ${\rm Trop}(X)$. Then we have
$Q = \Lambda \Delta \Lambda^T$.

The genus two graphs in Figure \ref{graphs} have three edges. Their Riemann matrices are
$$
Q_1   =  \begin{small}
\begin{bmatrix}
1 & \!\! 0& \!\! 0 \\
0 & \!\! 0 & \!\! 1 
\end{bmatrix} \!
\begin{bmatrix}
2 & 0 & 0\\
0 & \frac{1}{2} & 0\\
0 & 0 & 2
\end{bmatrix} \!
\begin{bmatrix}
1 & \!\! 0\\
0 & \!\! 0\\
0 & \!\! 1
\end{bmatrix}  =  
\begin{bmatrix}
2 & 0\\
0 & 2
\end{bmatrix} \end{small} ,\,\,
Q_2  = \begin{small}
\begin{bmatrix}
1 &\!\!\!\! -1& \!\! 0\\
 0 & \!1 & \!\!\!\! -1
\end{bmatrix} \!
\begin{bmatrix}
2 & 0 & 0\\
0 & 2 & 0\\
0 & 0 & 2
\end{bmatrix}\!
\begin{bmatrix}
\, 1 & \!0\\
\! -1 & \!1\\
\,0 & \!\!\!\! -1
\end{bmatrix}  =  
\begin{bmatrix}
\,\, 4 & \!\!\!\! -2\\
-2 & 4
\end{bmatrix}.\end{small}
$$

We now consider the degeneration of our curve $X$ over 
$\mathbb{K} = \CC\{ \! \{ \epsilon \} \! \}$
to its tropical limit. The Riemann matrix
can be written in the form
$B_\epsilon = -\frac{1}{\e} Q + R(\e)$,
where $R(\e)$ is a symmetric $g \times g$ matrix
whose entries are complex analytic functions in $\e$
that converge as $\e \rightarrow 0$.

Fix a point $\mathbf{a} \in \RR^g$.
Replacing ${\bf z}$ by ${\bf z} + \frac{1}{\e} Q {\bf a}$ in  the Riemann theta function
 (\ref{eq:RTFreal}), we obtain
\begin{equation}
\label{eq:thetaepsilon}
\theta({\bf z} + \frac{1}{\e}Q {\bf a}\, |\, B_\epsilon)\,\,\, = \,\,\,
\sum_{{\bf c} \in \mathbb{Z}^g} {\rm exp} \left[ -\frac{1}{2 \e} {\bf c}^T Q {\bf c} 
+ \frac{1}{\e} {\bf c}^T Q {\bf a} \right] \cdot {\rm exp} \left[
  \frac{1}{2} {\bf c}^T R(\e) {\bf c}  + {\bf c}^T {\bf z} \right].
\end{equation}
This expression converges for $\e \rightarrow 0$ provided
${\bf c}^T Q {\bf c} - 2 {\bf c}^T Q {\bf a} \geq 0$ for all ${\bf c} \in \mathbb{Z}^g$. Equivalently,
\begin{equation}
\label{eq:voronoicell}
\quad \mathbf{a}^T Q \mathbf{a} \,\leq \,(\mathbf{a}-{\bf c})^T Q(\mathbf{a}-{\bf c})
\quad \hbox{for all ${\bf c} \in \mathbb{Z}^g$.}
\end{equation}
This means that the distance from $\mathbf{a}$ to the origin,
in the metric given by $Q$,  is at most the distance to any other lattice point ${\bf c} \in \ZZ^g$.
In other words,  (\ref{eq:voronoicell}) means that
 $\mathbf{a}$ belongs to the {\em Voronoi cell} for $Q$.
 Under this hypothesis, we now consider the associated 
   \emph{Delaunay set}
 \begin{equation}
 \label{eq:delaunayset}
 \mathcal{D}_{\mathbf{a}, Q} \,\, = \,\, \bigl\{\, {\bf c} \in \ZZ^g \,: \,
 \mathbf{a}^T Q \mathbf{a} \,=\,
(\mathbf{a}-{\bf c})^T Q (\mathbf{a}- {\bf c}) \bigr\}. 
\end{equation}
This is the set of vertices of a polytope in the Delaunay subdivision of $\ZZ^g$  induced by $Q$.
If ${\bf a}$ is a vertex of the Voronoi cell then the Delaunay polytope 
${\rm conv}( \mathcal{D}_{\mathbf{a}, Q})$
 is $g$-dimensional.

As in \cite[\S 4]{struwe}, we observe that
${\rm exp} \left[- \frac{1}{2 \e} {\bf c}^T Q {\bf c} 
+ \frac{1}{\e} {\bf c}^T Q {\bf a} \right] $ converges
to $1$ for ${\bf c} \in \mathcal{D}_{\mathbf{a}, Q}$ and to $0$ for
 ${\bf c} \in \ZZ^g \backslash \mathcal{D}_{\mathbf{a}, Q}$.
 We have thus derived the following generalization of
 \cite[Theorem~4]{struwe}:

\begin{theorem}\label{degthetathm}
Fix $\mathbf{a}$ in the Voronoi cell  
of the tropical Riemann matrix $Q$. For $\epsilon \rightarrow 0$,~the series (\ref{eq:thetaepsilon})
converges to  a theta function
(\ref{eq:thetafunction}) supported on the Delaunay set 
  $\,\mathcal{C} = \mathcal{D}_{{\bf a},Q}$, namely
  \begin{equation} \qquad
    \theta_\mathcal{C}(\mathbf{x}) \,\,
    = \,\,\sum_{{\bf c} \in \mathcal{C}} a_{\bf c} \,
        {\rm exp} \! \left[ {\bf c}^T {\bf z} \right],
\quad {\rm where} \quad a_{\bf c} =      {\rm exp}
        \left[\frac{1}{2} {\bf c}^T R(0) {\bf c}   \right].  \end{equation}
\end{theorem}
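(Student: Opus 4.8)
The plan is to justify passing the limit $\e\to 0$ inside the infinite sum (\ref{eq:thetaepsilon}) and then to evaluate it term by term. I would write the general summand as a product $T_{\bf c}(\e)=S_{\bf c}(\e)\cdot G_{\bf c}(\e)$ of a \emph{scaling factor} $S_{\bf c}(\e)=\exp\!\big[\tfrac{1}{2\e}{\bf c}^TQ{\bf c}-\tfrac1\e{\bf c}^TQ{\bf a}\big]$ and a \emph{regular factor} $G_{\bf c}(\e)=\exp\!\big[\tfrac12{\bf c}^TR(\e){\bf c}+{\bf c}^T{\bf z}\big]$. Since the exponent of $S_{\bf c}(\e)$ equals $\tfrac1{2\e}\big[({\bf a}-{\bf c})^TQ({\bf a}-{\bf c})-{\bf a}^TQ{\bf a}\big]$, the observation recorded just before the theorem already supplies the pointwise limits $S_{\bf c}(\e)\to 1$ for ${\bf c}\in\mathcal C=\mathcal D_{{\bf a},Q}$ and $S_{\bf c}(\e)\to 0$ for ${\bf c}\notin\mathcal C$. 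Because $R(\e)$ is analytic with $R(\e)\to R(0)$, the regular factor converges to $\exp\!\big[\tfrac12{\bf c}^TR(0){\bf c}+{\bf c}^T{\bf z}\big]$. Multiplying, each summand tends to $a_{\bf c}\exp[{\bf c}^T{\bf z}]$ for ${\bf c}\in\mathcal C$ and to $0$ otherwise, with $a_{\bf c}=\exp[\tfrac12{\bf c}^TR(0){\bf c}]$ exactly as claimed; as $\mathcal C$ is finite, the sum of these limits is $\theta_\mathcal C({\bf z})$.

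The remaining point, which is also the main obstacle, is to legitimize the interchange of limit and infinite sum by dominated convergence for series. Here the positive definiteness of $Q$ does the essential work. On $\ZZ^g\setminus\mathcal D_{{\bf a},Q}$ the excess $\delta({\bf c}):={\bf c}^TQ{\bf c}-2{\bf a}^TQ{\bf c}$ is strictly positive and grows quadratically, $\delta({\bf c})\ge \lambda_{\min}(Q)\,\|{\bf c}\|^2-2\|Q{\bf a}\|\,\|{\bf c}\|$, where $\lambda_{\min}(Q)$ is the least eigenvalue of $Q$. Fixing a one-sided neighborhood $\e\in(\e_0,0)$ with $\e_0<0$ and bounding each factor by its worst value there, one finds $\sup_{\e}|S_{\bf c}(\e)|=\exp[-\tfrac1{2|\e_0|}\delta({\bf c})]$, while $|G_{\bf c}(\e)|\le\exp[\tfrac12\bar R\,\|{\bf c}\|^2+\|{\bf z}\|\,\|{\bf c}\|]$ with $\bar R=\sup_{\e\in[\e_0,0]}\|R(\e)\|<\infty$. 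Since $\bar R\to\|R(0)\|$ as $\e_0\to 0^-$, I would shrink the neighborhood so that $\tfrac{\lambda_{\min}(Q)}{|\e_0|}>\bar R$; then the combined exponent is negative definite in ${\bf c}$ and $M_{\bf c}:=\exp[-\tfrac1{2|\e_0|}\delta({\bf c})+\tfrac12\bar R\|{\bf c}\|^2+\|{\bf z}\|\|{\bf c}\|]$ is a convergent, $\e$-independent Gaussian majorant, $\sum_{{\bf c}\in\ZZ^g}M_{\bf c}<\infty$.

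With this majorant in hand the dominated convergence theorem permits the term-by-term passage computed in the first paragraph, yielding $\lim_{\e\to 0}\theta({\bf z}-\tfrac1\e Q{\bf a}\mid B_\e)=\sum_{{\bf c}\in\mathcal C}a_{\bf c}\exp[{\bf c}^T{\bf z}]=\theta_\mathcal C({\bf z})$. I expect the only genuinely delicate step to be the uniform control of the tail: one must be sure the Gaussian decay extracted from $Q$ strictly dominates the quadratic growth coming from $R(\e)$, which is precisely what the eigenvalue condition $\lambda_{\min}(Q)/|\e_0|>\bar R$ guarantees. The finitely many lattice points just outside the Voronoi cell, for which $\delta({\bf c})$ is small but positive, cause no trouble: they are dominated by $M_{\bf c}$ and vanish in the limit by the pointwise statement, so they do not contribute to $\theta_\mathcal C$.
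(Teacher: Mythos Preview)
Your proposal is correct and follows essentially the same route as the paper: the paper's argument is the sentence immediately preceding the theorem, which identifies the pointwise limit of the scaling factor $\exp\bigl[\tfrac{1}{2\e}{\bf c}^TQ{\bf c}-\tfrac{1}{\e}{\bf c}^TQ{\bf a}\bigr]$ as $1$ or $0$ according to membership in $\mathcal{D}_{{\bf a},Q}$, and then cites \cite{struwe} for the remaining analytical details. You supply precisely those details---the dominated-convergence argument with a Gaussian majorant governed by the eigenvalue condition $\lambda_{\min}(Q)/|\e_0|>\bar R$---that the paper delegates to that reference.
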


The Delaunay polytope ${\rm conv}(\mathcal{C})$
can have any dimension between $0$ and $g$, depending on the
location of ${\bf a}$ in the Voronoi cell (\ref{eq:voronoicell}).
If $\mathbf{a}$ lies in the interior then $\mathcal{C} = \{ {\bf 0} \}$ is just the origin.
We are most interested in the case when ${\bf a}$ is a vertex of the Voronoi cell,
and we now assume this to be the case. This ensures that
$\mathcal{C}$ is the vertex set of a $g$-dimensional Delaunay polytope.
For fixed $g$, there is only a finite list of  Delaunay polytopes,
up to lattice isomorphism.
Thanks to \cite{dutour} and its references,
 that list is known for $g \leq 6$. However,
 not every Delaunay polytope arises from a
 curve $X$ and its tropical Riemann matrix $Q = \Lambda \Delta \Lambda^T$.
 To illustrate these points, we present the list of all relevant Delaunay polytopes for $g \leq 4$.

\begin{proposition}\label{prop:delaunaypolytopes}
The complete  list of Delaunay polytopes $\mathcal{C}$ arising from metric graphs for $g \leq 4$ is as follows.
For $g=2$ there are two types:  triangle and square.
For $g=3$ there are five types: tetrahedron, square-based pyramid,
octahedron, triangular prism and cube. For $g=4$ there are
$17$ types. These $\mathcal{C}$ have between $5$ and $16$ vertices. They are
listed in  Table~\ref{table:erdahl}.
\end{proposition}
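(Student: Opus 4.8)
The plan is to reduce the statement to a finite computation governed by the \emph{second Voronoi reduction} of the cone $\mathcal{S}^g_{>0}$ of positive definite symmetric $g\times g$ forms. Recall that each $Q$ induces a Delaunay subdivision of $\RR^g$ relative to $\ZZ^g$, that this combinatorial type is constant on the relative interior of each \emph{secondary cone}, that there are only finitely many such cones up to $GL_g(\ZZ)$, and that the $g$-dimensional cells of the subdivision are precisely the polytopes $\mathrm{conv}(\mathcal{D}_{\mathbf{a},Q})$ attached to the vertices $\mathbf{a}$ of the Voronoi cell in the sense of Section~\ref{sec2}. By the formula $Q=\Lambda\Delta\Lambda^T$, the tropical Riemann matrices coming from metric graphs are exactly the points of the \emph{cographic cones}: fixing a connected graph $G$ of genus $g$ with edge set $E$ and writing $\lambda_e\in\ZZ^g$ for the column of $\Lambda$ indexed by $e$, one has $Q=\sum_{e\in E}\delta_e\,\lambda_e\lambda_e^T$, so as the lengths $\delta_e>0$ vary the matrix $Q$ sweeps out a polyhedral cone $\sigma_G\subseteq\mathcal{S}^g_{>0}$. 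Thus the task is to list, up to lattice isomorphism, all Delaunay polytopes occurring in the subdivisions of forms $Q$ in $\bigcup_G\sigma_G$.

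Since the Delaunay type depends only on the secondary cone containing $Q$, and since both the graphs of genus $g$ and the relevant secondary cones are finite in number, this is a finite check. Concretely I would, for each $g\le 4$, enumerate the finitely many connected trivalent graphs of genus $g$ (these have $3g-3$ edges and $2g-2$ vertices, and their cones $\sigma_G$ together with the faces obtained by contracting non-bridge edges exhaust the cographic locus), choose a cycle basis to record $\Lambda$, and---in the symbolic spirit of this paper---treat the edge lengths $\delta_e$ as parameters. For each graph one then determines the $g$-dimensional Delaunay cells of $Q=\Lambda\Delta\Lambda^T$ as $\Delta$ ranges over its positive orthant, and identifies each against the complete classification of Delaunay polytopes in dimension $g$, which is known for $g\le 6$ by \cite{dutour}. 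For $g=2$ the dumbbell and the theta graph produce the forms $Q_1$ and $Q_2$ displayed above, giving the square and the triangle; for $g=3$ the five trivalent graphs realize exactly the tetrahedron, square-based pyramid, octahedron, triangular prism and cube, which is already the entire three-dimensional classification.

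The genus four case is where the real work, and the main obstacle, lies, and it is of Schottky type. For $g\le 3$ one has the dimension coincidence $\dim\sigma_G=3g-3=\binom{g+1}{2}=\dim\mathcal{S}^g_{>0}$, so the cographic cones are top-dimensional and there is simply no room for an obstruction; the direct enumeration above then confirms that every Delaunay polytope of the classification is realized. For $g=4$, however, $3g-3=9<10=\binom{5}{2}$, so each $\sigma_G$ has codimension one and $\bigcup_G\sigma_G$ is a proper, measure-zero subset of $\mathcal{S}^4_{>0}$. The crux is therefore to decide, for each Delaunay polytope in the four-dimensional classification, whether its secondary cone meets this codimension-one cographic locus, equivalently whether the polytope admits a realization $Q=\Lambda\Delta\Lambda^T$. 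I would settle this by intersecting the cographic cones of the trivalent genus-four graphs with the secondary fan---a computation carried out with the polyhedral software underlying \cite{dutour}---and reading off which types survive. The expected outcome is that exactly $17$ of the full list are cographic, the excluded types being precisely those whose secondary cones lie entirely outside the cographic locus; assembling these $17$ polytopes, with vertex counts ranging from $5$ to $16$, yields Table~\ref{table:erdahl} and completes the proof.
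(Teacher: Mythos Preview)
Your strategy is sound and leads to the same finite enumeration, but it is organized differently from the paper and misses the one structural shortcut that makes the computation clean. The paper does not pass through the secondary fan at all. Instead it observes that for a cographic form $Q=\Lambda\Delta\Lambda^T$ the Voronoi cell is the zonotope $\sum_e [-\tfrac12,\tfrac12]\,\delta_e\lambda_e$, and hence, after reorienting edges so that a chosen vertex ${\bf a}$ has all positive coefficients, the dual Delaunay cell is simply
\[
\mathrm{conv}(\mathcal{C}) \;=\; \bigl\{\,{\bf c}\in\RR^g \,:\, 0\le \lambda_e^T{\bf c}\le 1 \text{ for every edge } e\,\bigr\}.
\]
This formula depends only on the columns $\lambda_e$ of $\Lambda$ and on the choice of vertex (equivalently, an acyclic orientation), \emph{not} on the edge lengths $\delta_e$. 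So the paper's enumeration is over pairs (graph, orientation), computing one lattice polytope per pair from its facet description, and then matching against the Erdahl--Ryshkov list of $19$ four-dimensional Delaunay polytopes; two of those (the pyramid over the octahedron and the cross polytope) never appear, leaving the~$17$ in Table~\ref{table:erdahl}.

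By contrast, you treat the $\delta_e$ as genuine parameters and propose to intersect each $\sigma_G$ with the secondary fan. That step is superfluous: each cographic cone $\sigma_G$ is already a (closed) secondary cone, so the Delaunay type is constant on its interior and there is nothing to intersect. Relatedly, your dimension-count for $g\le 3$ (``no room for an obstruction'') is only a heuristic; equality $3g-3=\binom{g+1}{2}$ does not by itself force every secondary cone to be cographic, and in the paper the $g=3$ case is settled by the same direct enumeration (citing \cite{struwe}) rather than by dimension. What your framing buys is a conceptual explanation of why $g=4$ is the first place something can go wrong; what the paper's zonotope formula buys is that the actual check becomes a short polytope computation with no parameters.
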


\begin{proof}
For any edge $e$ of the graph, let 
 $\lambda_e \in \ZZ^g$ be the  associated
column of $\Lambda$.
 The Voronoi cell is a zonotope, obtained
by summing line segments parallel to $\lambda_e$ for all $e$. It has ${\bf a}$ as a vertex.
 After reorienting edges,
in the corresponding expression of ${\bf a}$ as a linear combination of the vectors $\lambda_e$,
all coefficients are positive. This means that the Delaunay polytope equals
\begin{equation}
\label{eq:betweenpoly}
 {\rm conv}(\mathcal{C}) \quad = \quad \bigl\{\,
{\bf c} \in \RR^g \,: \, 0 \leq \lambda_e^T {\bf c} \leq 1 \,\,\hbox{for all edges} \,\, e \,\bigr\}. 
\end{equation}
Our task is to classify the polytopes (\ref{eq:betweenpoly})
for all graphs of genus $g$ and all their orientations.
For $g=2$ this is easy, and for $g=3$ it was done in
\cite[Theorem 4]{struwe}. We see from \cite[Figure 2]{struwe}
 that every Delaunay polytope can be realized by a curve over $\mathbb{K}$.
 For $g=4$ we started from the classification of $19$ Delaunay polytopes in \cite[Theorem 6.2]{erdahl},
 labeled $1,2,\ldots,16$ in \cite[Table V]{erdahl} and labeled A,B,C in  \cite[Table VI]{erdahl}.
Two types do not arise from graphs, namely the
pyramid over the octahedron (\#B) and the cross polytope (\#C).
The other $17$ Delaunay polytopes all arise from graphs.
They are listed in Table~\ref{table:erdahl}. The second row gives the number of vertices.
The third row gives the number of facets.
These two numbers uniquely identify the isomorphism type of $\mathcal{C}$.
The last row indicates which graphs give rise to that Delaunay polytope.
We refer to the $16$ graphs of genus $4$
by the labeling used in \cite[Table 1]{CKS}.
Table~\ref{table:erdahl} was constructed by a direct computation.
It establishes the $g=4$ case in Proposition \ref{prop:delaunaypolytopes}.
\end{proof}

\begin{table}[h]
\setcounter{MaxMatrixCols}{20}
\vspace{-0.15in}
$$
\begin{matrix}
\hbox{polytopes}& 
1 & 2 & 3 & 4 & 5 & 6 & 7 & 8 & 9 & 10 & 11 & 12 & 13 & 14 & 15 & 16 & {\rm A}  \\
\hbox{vertices}  &  5 & 6 & 7 & 7 & 8 & 8 & 8 & 9 & 9 & 9 & 10 & 10 & 10 & 12 & 12 & 16 & 6 \\
\hbox{facets}    &  5 & 6 & 6 & 8 & 7 & 9 & 6 & 7 & 9 & 6 & 7 & 12  & 10   & 7    & 10     & 8 & 9 \\
\hbox{graphs} &
\!  {1,2,3,4 \atop 5,7,10,13} & 
{1,3,4 \atop 5,6,9} & 
 3,\! 7,\!10  & 
4 &
 7 & 
5 & 
 8,\!11,\!15 &
 6 &
 10 &
 12 &
11 &
 9 & 
 13 &
 12 &
 15 &
 16 &
  2 
\end{matrix}  \vspace{-0.2in}
$$
\caption{The $17$ Delaunay polytopes
that arise from the $16$ graphs of genus $4$.
Polytopes are labeled as in \cite[Tables V and VI]{erdahl}
and graphs are labeled as in \cite[Table 1]{CKS}.
For instance, the complete bipartite graph $K_{3,3}$ is \#2, and it has
two Delaunay polytopes, namely the simplex (\#1) and  the cyclic $4$-polytope with $6$ vertices (\#A).
The polytope \#3 has $7$ vertices and $6$ facets. It is the
 pyramid over the triangular prism, and it arises from three graphs (\#3,7,10).}
	\label{table:erdahl}
\end{table}

\section{Hirota Varieties}
\label{sec3}

Starting from the theta function  of the configuration $\mathcal{C}$
in (\ref{eq:thetafunction}), we consider the tau function
$$ \tau(x,y,t) \,\, = \,\, \theta_\mathcal{C}( {\bf u} x + {\bf v} y + {\bf w} t ) 
\,\, = \,\, \sum_{i=1}^m a_i \,{\rm exp} \biggl[
\bigr(\sum_{j=1}^g c_{ij} u_j \bigr)\, x\,\,+\,
\bigr(\sum_{j=1}^g c_{ij} v_j \bigr) \,y\,\,+\,
\bigr(\sum_{j=1}^g c_{ij} w_j \bigr) \,t
  \biggr].
$$ 
The {\em Hirota variety} $\mathcal{H}_\mathcal{C}$  consists of all
 points $\bigl({\bf a}, ({\bf u},{\bf v}, {\bf w})\bigr)$ in the parameter space
$(\mathbb{K}^*)^m \times \mathbb{WP}^{3g-1}$ such that
$\tau(x,y,t)$ satisfies Hirota's differential equation  (\ref{eq:hirotaF}).
Thus  $\mathcal{H}_\mathcal{C}$  is an analogue to the Dubrovin threefold \cite{agostini&co}
for the classical Riemann theta function of a smooth curve.
 
We recall from \cite[equation (2.25)]{Kodamabook} that (\ref{eq:hirotaF})
can be written via the Hirota differential operators as $P(D_x,D_y,D_t) \tau \bullet \tau = 0$,
for the special polynomial $\, P(x,y,t) \, =\, x^4 - 4 xt + 3 y^2$.
For any fixed index $j$, the equation $P(u_j,v_j,w_j) = 0$
defines a  curve in the weighted projective plane $\mathbb{WP}^2$.
More generally, for any two indices $k, \ell $ in $\{1,\ldots,m\}$, we 
consider the hypersurface 
in $\mathbb{WP}^{3g-1}$ defined by
$$
P_{k \ell}({\bf u},{\bf v},{\bf w}) \,\,\, := \,\,\,
P \bigl( \,({\bf c}_k - {\bf c}_\ell) \cdot {\bf u},
\,({\bf c}_k - {\bf c}_\ell) \cdot {\bf v},
\,({\bf c}_k - {\bf c}_\ell) \cdot {\bf w} \bigr). $$
This expression is unchanged if we switch $k$ and $\ell$.
The defining equations of the Hirota variety $\mathcal{H}_\mathcal{C}$ can be obtained from the following lemma,
which is proved by direct computation.

\begin{lemma}
The result of applying the differential operator (\ref{eq:hirotaF}) to the function $\tau(x,y,t)$ equals
\begin{equation}
\label{eq:klsum}
 \sum_{1 \leq k < \ell \leq m} a_k a_\ell \,P_{k \ell}({\bf u},{\bf v},{\bf w})\,
{\rm exp} \bigl[\,
(({\bf c}_k{+}{\bf c}_\ell) \cdot {\bf u}) \, x
\,+  (({\bf c}_k{+}{\bf c}_\ell) \cdot {\bf v}) \,y
\,+  (({\bf c}_k{+}{\bf c}_\ell) \cdot {\bf w}) \,t\,
\bigr].
\end{equation}
\end{lemma}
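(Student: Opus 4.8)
The plan is to recognize the left-hand side of (\ref{eq:hirotaF}) as the Hirota bilinear expression $\tfrac12\,P(\partial_x,\partial_y,\partial_t)\,\tau\bullet\tau$ with $P(x,y,t)=x^4-4xt+3y^2$, exactly as recalled just before the lemma from \cite[equation (2.25)]{Kodamabook}. Here $P(\partial_x,\partial_y,\partial_t)\,f\bullet g$ is the operator obtained by replacing each monomial $x^iy^jt^k$ of $P$ with $(\partial_x-\partial_{x'})^i(\partial_y-\partial_{y'})^j(\partial_t-\partial_{t'})^k$ acting on $f(x,y,t)\,g(x',y',t')$ and then setting $x'=x$, $y'=y$, $t'=t$. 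As a first step I would expand the three relevant Hirota monomials on $\tau\bullet\tau$, namely $\tfrac12[\,(\partial_x-\partial_{x'})^4-4(\partial_x-\partial_{x'})(\partial_t-\partial_{t'})+3(\partial_y-\partial_{y'})^2\,]\,\tau\bullet\tau$, and check that it reproduces the seven-term combination $\tau\tau_{xxxx}-4\tau_{xxx}\tau_x+3\tau_{xx}^2+4\tau_x\tau_t-4\tau\tau_{xt}+3\tau\tau_{yy}-3\tau_y^2$. This is the only place where the specific coefficients are pinned down, and it is a routine differentiation.

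Next I would abbreviate the linear phases by $\phi_i=({\bf c}_i\cdot{\bf u})\,x+({\bf c}_i\cdot{\bf v})\,y+({\bf c}_i\cdot{\bf w})\,t$, so that $\tau=\sum_{i=1}^m a_i\,e^{\phi_i}$. The key computation is the eigenvalue property of the Hirota operator on exponentials:
\[
P(\partial_x,\partial_y,\partial_t)\,e^{\phi_k}\bullet e^{\phi_\ell}
\;=\;
P\bigl(({\bf c}_k-{\bf c}_\ell)\cdot{\bf u},\,({\bf c}_k-{\bf c}_\ell)\cdot{\bf v},\,({\bf c}_k-{\bf c}_\ell)\cdot{\bf w}\bigr)\,e^{\phi_k+\phi_\ell}.
\]
This holds because $(\partial_x-\partial_{x'})\,e^{\phi_k(x)}e^{\phi_\ell(x')}=\bigl(({\bf c}_k-{\bf c}_\ell)\cdot{\bf u}\bigr)\,e^{\phi_k(x)}e^{\phi_\ell(x')}$ and likewise in the $y$- and $t$-variables, so each Hirota difference operator simply pulls out the corresponding scalar. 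The resulting prefactor is exactly $P_{k\ell}({\bf u},{\bf v},{\bf w})$ as defined before the lemma.

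Finally I would expand bilinearly, $P(\partial_x,\partial_y,\partial_t)\,\tau\bullet\tau=\sum_{k=1}^m\sum_{\ell=1}^m a_k a_\ell\,P_{k\ell}({\bf u},{\bf v},{\bf w})\,e^{\phi_k+\phi_\ell}$, and then reduce the double sum. The diagonal terms vanish since $P_{kk}=P(0,0,0)=0$. For the off-diagonal terms I pair $(k,\ell)$ with $(\ell,k)$: since $P$ is an even polynomial we have $P_{\ell k}=P_{k\ell}$ (as already noted in the text), while $e^{\phi_k+\phi_\ell}=e^{\phi_\ell+\phi_k}$, so the two ordered pairs contribute equally. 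Hence the double sum equals $2\sum_{1\le k<\ell\le m} a_k a_\ell\,P_{k\ell}({\bf u},{\bf v},{\bf w})\,e^{\phi_k+\phi_\ell}$, and dividing by the overall factor $\tfrac12$ yields precisely (\ref{eq:klsum}) once $\phi_k+\phi_\ell$ is rewritten as $(({\bf c}_k{+}{\bf c}_\ell)\cdot{\bf u})x+(({\bf c}_k{+}{\bf c}_\ell)\cdot{\bf v})y+(({\bf c}_k{+}{\bf c}_\ell)\cdot{\bf w})t$.

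I expect the only genuinely delicate step to be the bookkeeping in the first paragraph, matching the symmetric Hirota form to the asymmetric seven-term operator; this is exactly why the lemma is stated to be proved \emph{by direct computation}. Everything downstream is the clean exponential-eigenvalue identity together with the evenness-and-symmetry argument that collapses the ordered double sum into the sum over $k<\ell$.
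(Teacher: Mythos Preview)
Your argument is correct and is exactly the ``direct computation'' the paper has in mind: express the left side of (\ref{eq:hirotaF}) as $\tfrac12\,P(D_x,D_y,D_t)\,\tau\bullet\tau$, use the eigenvalue identity for Hirota operators on exponentials, and collapse the ordered double sum using $P_{kk}=0$ and $P_{k\ell}=P_{\ell k}$. Your bookkeeping with the factor $\tfrac12$ is in fact more careful than the surrounding text, which only needs the \emph{vanishing} of the Hirota form and hence omits that constant.
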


The polynomials defining the Hirota variety of $\mathcal{C}$
are the coefficients we obtain by 
writing (\ref{eq:klsum})  as a linear combination
of distinct exponentials. These correspond to points in the set
$$ \mathcal{C}^{[2]} \,\, = \,\,
\bigl\{ \,{\bf c}_k + {\bf c}_\ell \,\,: \,\, 1 \leq k < \ell \leq  m \,\bigr\}
\quad \subset \,\,\, \ZZ^g. $$
A point ${\bf d}$ in $\mathcal{C}^{[2]}$ is {\em uniquely attained}
if there exists precisely one index pair $(k,\ell)$ such~that
${\bf c}_k + {\bf c}_\ell = {\bf d}$.
In that case, $(k,\ell)$ is a {\em unique pair}, and this contributes
the quartic  $P_{k\ell}({\bf u},{\bf v},{\bf w}) $ to our defining equations.
If ${\bf d} \in \mathcal{C}^{[2]}$ is
not uniquely attained, then the coefficient we seek is 
\begin{equation}
\label{eq:nonunique}
\sum_{1 \leq k < \ell \leq m\atop
{\bf c}_k + {\bf c}_\ell \,=\, {\bf d}} \!\! P_{k\ell} ({\bf u},{\bf v},{\bf w}) \, a_k a_\ell.
\end{equation}

\begin{corollary} \label{cor:hiro}
The Hirota variety $\mathcal{H}_\mathcal{C}$ is defined by
the quartics $P_{k\ell}$ for all unique pairs $(k,\ell)$
and by the polynomials (\ref{eq:nonunique}) 
for all non-uniquely attained points $ {\bf d} \in \mathcal{C}^{[2]}$.
If all points in $\,\mathcal{C}^{[2]}$ are uniquely attained then
$\mathcal{H}_\mathcal{C}$
  is defined by   the $\binom{m}{2}$ quartics
$P_{k\ell}({\bf u},{\bf v},{\bf w})$, so its equations do not involve
the coefficients $a_1,\ldots,a_m$.
This is the case when
 $\,\mathcal{C}$ is the vertex set of a simplex.
\end{corollary}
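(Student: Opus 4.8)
The plan is to read off the defining equations of $\mathcal{H}_\mathcal{C}$ directly from the expansion in the preceding Lemma, by grouping the terms of (\ref{eq:klsum}) according to the value of the exponent $\mathbf{c}_k + \mathbf{c}_\ell \in \mathcal{C}^{[2]}$. The key observation is that the functions $\mathrm{exp}[\,\mathbf{d}\cdot(\mathbf{u}x + \mathbf{v}y + \mathbf{w}t)\,]$, for distinct $\mathbf{d} \in \ZZ^g$, are linearly independent over the field of constants. Hence the single expression in (\ref{eq:klsum}) vanishes identically in $(x,y,t)$ if and only if the coefficient attached to each distinct exponent $\mathbf{d}$ vanishes separately. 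This converts Hirota's equation (\ref{eq:hirotaF}) into a system of polynomial equations indexed by $\mathcal{C}^{[2]}$.

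First I would fix an exponent $\mathbf{d} \in \mathcal{C}^{[2]}$ and collect all index pairs $(k,\ell)$ with $k < \ell$ and $\mathbf{c}_k + \mathbf{c}_\ell = \mathbf{d}$. If $\mathbf{d}$ is uniquely attained there is exactly one such pair, and its coefficient in (\ref{eq:klsum}) is $a_k a_\ell\, P_{k\ell}(\mathbf{u},\mathbf{v},\mathbf{w})$. Since $(\mathbf{a}, (\mathbf{u},\mathbf{v},\mathbf{w})) \in (\mathbb{K}^*)^m \times \mathbb{WP}^{3g-1}$ forces $a_k a_\ell \neq 0$, the vanishing of this coefficient is equivalent to $P_{k\ell}(\mathbf{u},\mathbf{v},\mathbf{w}) = 0$, a quartic in the $(\mathbf{u},\mathbf{v},\mathbf{w})$ variables alone. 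If $\mathbf{d}$ is not uniquely attained, the corresponding coefficient is precisely the polynomial (\ref{eq:nonunique}), and its vanishing is the defining equation for that $\mathbf{d}$. Taking these conditions over all $\mathbf{d} \in \mathcal{C}^{[2]}$ yields exactly the stated generating set for $\mathcal{H}_\mathcal{C}$.

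For the second assertion, if every point of $\mathcal{C}^{[2]}$ is uniquely attained, then every distinct exponent contributes a single quartic $P_{k\ell}$ and no nonlinear-in-$\mathbf{a}$ terms survive; since there are $\binom{m}{2}$ pairs $(k,\ell)$ and each produces a distinct uniquely attained point, we obtain the $\binom{m}{2}$ quartics, manifestly independent of $a_1,\ldots,a_m$. Finally I would verify the simplex case: when $\mathcal{C}$ is the vertex set of a $g$-simplex, the points $\mathbf{c}_1,\ldots,\mathbf{c}_{g+1}$ are affinely independent, so every pairwise sum $\mathbf{c}_k + \mathbf{c}_\ell$ is distinct. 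Indeed, $\mathbf{c}_k + \mathbf{c}_\ell = \mathbf{c}_{k'} + \mathbf{c}_{\ell'}$ with $\{k,\ell\} \neq \{k',\ell'\}$ would give a nontrivial affine dependence among the vertices, contradicting affine independence. The main obstacle in the argument is the linear independence of the distinct exponentials, which underlies the passage from the single identity (\ref{eq:klsum}) to the coefficientwise system; everything else is bookkeeping on the index set $\mathcal{C}^{[2]}$.
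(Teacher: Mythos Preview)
Your proposal is correct and matches the paper's intended argument. The paper does not supply an explicit proof of the corollary, treating it as an immediate consequence of the preceding lemma and the discussion around (\ref{eq:klsum}) and (\ref{eq:nonunique}); you have simply written out the implicit steps---grouping terms by $\mathbf{d}\in\mathcal{C}^{[2]}$, invoking linear independence of distinct exponentials, using $a_k a_\ell\neq 0$ on $(\mathbb{K}^*)^m$, and checking affine independence for the simplex---exactly as the text intends.

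One minor remark: your linear-independence claim for the exponentials $\exp[(\mathbf{d}\cdot\mathbf{u})x+(\mathbf{d}\cdot\mathbf{v})y+(\mathbf{d}\cdot\mathbf{w})t]$ holds whenever the triples $(\mathbf{d}\cdot\mathbf{u},\mathbf{d}\cdot\mathbf{v},\mathbf{d}\cdot\mathbf{w})$ are pairwise distinct, hence for generic $(\mathbf{u},\mathbf{v},\mathbf{w})$; this is enough to identify the polynomial coefficients as the defining equations of the variety, which is how the paper reads the statement.
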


\begin{example}[The Square]
Let $g=2$ and $\mathcal{C} = \{0,1\}^2$ as in Example~\ref{ex:ClaudiaYelena}.
Here, $\mathcal{C}^{[2]} = \{(0,1),(1,0),(1,1),(1,2),(2,1)\}$.
The Hirota variety $\mathcal{H}_\mathcal{C}$ is a complete intersection of
codimension three in $(\mathbb{K}^*)^4 \times \mathbb{WP}^5$.  
 There are four unique pairs $(k,\ell)$ and these contribute the two quartics
 $P_{13} = P_{24} =  u_1^4 - 4 u_1 w_1 + 3 v_1^2  $ and
 $P_{12} = P_{34} =  u_2^4 - 4 u_2 w_2 + 3 v_2^2  $.
 The point ${\bf d} = (1,1)$  is not uniquely attained in $ \mathcal{C}^{[2]}$.
 The polynomial (\ref{eq:nonunique}) contributed by this ${\bf d}$ equals
 \begin{equation}
 \label{eq:PP}
  P(u_1+u_2,v_1+v_2,w_1+w_2) \, a_{00} a_{11}\, \, + \,\,
  P(u_1-u_2,v_1-v_2,w_1-w_2)  \,a_{01} a_{10} .
 \end{equation}
 For any point in $\mathcal{H}_\mathcal{C}$, we can write
   $\tau(x,y,t)$ as a $ (2,4)$-soliton, as 
   shown in \cite[\S~2.5]{Kodamabook}.
\end{example}

\begin{example}[The Cube]
Let $g=3$ and consider the tropical degeneration of a smooth plane quartic
to a rational quartic. By \cite[Example 6]{struwe}, the associated
theta function equals
\begin{equation}
\label{eq:cubetheta} \begin{matrix} \theta_\mathcal{C} \,\, = \,\, 
& a_{000} \,+ \, a_{100} \,{\rm exp}[z_1] \, + \,
 a_{010} \,{\rm exp}[z_2] \,+ \, a_{001} \,{\rm exp}[z_3]\, + \, a_{110} \,{\rm exp}[z_1+z_2]  
 \\  & \,\,\qquad + \,\, a_{101} \,{\rm exp}[z_1+z_3] \, + \,
 a_{011} \,{\rm exp}[z_2+z_3] \, + \, a_{111} \,{\rm exp}[z_1+z_2+z_3] .
 \end{matrix}
 \end{equation}
 We compute the Hirota variety in $(\mathbb{K}^*)^8 \times \mathbb{WP}^8$.
 The set $\mathcal{C}^{[2]}$ consists of $19$ points. Twelve
 are uniquely attained, one for each edge of the cube. These give
 rise to the three familiar quartics $u_j^4 - 4 u_j w_j + 3 v_j^2$,
  one for each edge direction ${\bf c}_k - {\bf c}_\ell$.
 Six points in $\mathcal{C}^{[2]}$ are attained twice. They
 contribute equations like~(\ref{eq:PP}), one for each of the
 six facets of the cube. Finally, the point ${\bf d} = (1,1,1)$ is 
 attained four times. 
The polynomial (\ref{eq:nonunique}) contributed by ${\bf d}$ equals
 \begin{equation}
 \label{eq:cubecenter} \begin{matrix} 
   &   P(\,u_1 + u_2 + u_3, \,v_1 + v_2 + v_3, \,w_1 + w_2 + w_3\,)\, a_{000} a_{111}  \\
+ &   P(\,u_1 + u_2 - u_3, \, v_1 + v_2 - v_3, \,w_1 + w_2 - w_3\, )\, a_{001} a_{110}  \\
+ &   P(\,u_1 - u_2 + u_3, \,v_1 - v_2 + v_3, \,w_1 - w_2 + w_3\,)\, a_{010} a_{101}  \\
+ &   P(\,-u_1 {+} u_2 {+} u_3, \,-v_1 {+} v_2 {+} v_3, \,-w_1 {+} w_2 {+} w_3\,)\, a_{100} a_{011}.  
\end{matrix}
\end{equation}
We now restrict to the $9$-dimensional component of  $\mathcal{H}_\mathcal{C}$
that lies in $\{ a_{000} a_{110} a_{101} a_{011} \,=\,a_{001} a_{010} a_{100} a_{111} \}$.
Its image in $\mathbb{WP}^8$ has dimension $5$, with fibers that are
cones over $\PP^1 {\times} \PP^1 {\times} \PP^1$.
They are defined by seven equations arising from non-unique $(k,\ell)$. Six of these
are binomials~(\ref{eq:PP}). Extending \cite[\S 2.5]{Kodamabook}, we identify $\tau(x,y,t)$
with $(3,6)$-solitons for 
\begin{equation}
\label{eq:dreisechs}
 A \,\,\, = \,\,\, \begin{small} \begin{pmatrix} 
1 & 1 & 0  & 0 & 0 & 0 \\
0 & 0 & 1 & 1 & 0  & 0 \\
0 & 0 & 0 & 0 & 1 & 1  \end{pmatrix} . \end{small} \end{equation}
By definition,  a $(3,6)$-soliton for the matrix $A$ has the form
\begin{equation}\label{eq:solitonTilde}
\tilde \tau(x,y,t) \,\, = \,\,
\sum_I \prod_{i,j \in I \atop i < j} (\kappa_j - \kappa_i) \cdot
{\rm exp}  \biggl[\, x \cdot \sum_{i \in I} \kappa_i  \,  + \,
y \cdot \sum_{i \in I} \kappa_i^2 \,+\,
t \cdot \sum_{i \in I} \kappa_i^3 \, \biggr] ,
\end{equation}
where $I$ runs over the eight bases
$\,135, 136, 145, 146, 235, 236, 245, 246$. To get from (\ref{eq:cubetheta}) to this form,
we use the following parametric representation of the main component in $\mathcal{H}_C$:
$$  \begin{matrix} u_1 = \kappa_1-\kappa_2\,,\quad
 v_1 = \kappa_1^2- \kappa_2^2 \, , \quad
 w_1 = \kappa_1^3-\kappa_2^3 \, , \\
 u_2 = \kappa_3-\kappa_4 \, ,\quad
 v_2 = \kappa_3^2-\kappa_4^2 \, , \quad
 w_2 = \kappa_3^3 - \kappa_4^3\, , \\
 u_3 = \kappa_5- \kappa_6 \, , \quad
 v_3 = \kappa_5^2-\kappa_6^2\, ,\quad
 w_3 = \kappa_5^3-\kappa_6^3 \, , \\
 a_{111} = (\kappa_3-\kappa_5)(\kappa_1-\kappa_5) (\kappa_1-\kappa_3) \lambda_0 \lambda_1 \lambda_2 \lambda_3 \,, \,\,\,\,
 a_{011} = (\kappa_3-\kappa_5)(\kappa_2-\kappa_5)(\kappa_2-\kappa_3) \lambda_0 \lambda_2 \lambda_3\,, \\
 a_{101} = (\kappa_4-\kappa_5)(\kappa_1-\kappa_5)(\kappa_1-\kappa_4) \lambda_0 \lambda_1 \lambda_3\, ,\,\,\,\,
 a_{001} = (\kappa_4-\kappa_5)(\kappa_2-\kappa_5)(\kappa_2-\kappa_4) \lambda_0 \lambda_3\,, \\
 a_{110} = (\kappa_3-\kappa_6)(\kappa_1-\kappa_6)(\kappa_1-\kappa_3) \lambda_0 \lambda_1 \lambda_2 \, , \,\,\,\,
 a_{010} = (\kappa_3-\kappa_6)(\kappa_2-\kappa_6)(\kappa_2-\kappa_3) \lambda_0 \lambda_2 \, ,  \\
 a_{100} = (\kappa_4-\kappa_6)(\kappa_1-\kappa_6)(\kappa_1-\kappa_4) \lambda_0 \lambda_1 \, , \,\,\,\,
 a_{000} = (\kappa_4-\kappa_6)(\kappa_2-\kappa_6)(\kappa_2-\kappa_4) \lambda_0 .
 \end{matrix}
$$
If we multiply (\ref{eq:solitonTilde}) by 
$\, {\rm exp} \bigl[\, - (\kappa_2 + \kappa_4 + \kappa_6)\, x -
(\kappa_2^2 + \kappa_4^2 + \kappa_6^2) \,y - 
(\kappa_2^3 + \kappa_4^3 + \kappa_6^3) \, t \,  \bigr]\,$
then we obtain the desired function
$\,\theta_\mathcal{C} ({\bf u} x + {\bf v} y + {\bf w} t)\,$
for the above generic point on the Hirota variety.
The extraneous exponential factor disappears after
we pass from $\tilde \tau(x,y,t)$ to $\partial_x^2 \, {\rm log}(\tilde \tau(x,y,t))$. Both
 versions of the $(3,6)$-soliton satisfy (\ref{eq:hirotaF}) and they
represent the same solution to the KP equation (\ref{eq:KP1}).
An analogous construction works for the cube $\mathcal{C} = \{0,1\}^g$  in any dimension~$g$.
\end{example}

We now consider the simplex $\mathcal{C} = \{{\bf 0}, {\bf e}_1,\ldots,{\bf e}_g\}$.
This arises from plane quartics ($g=3$) that degenerate to 
 four lines or to a conic plus two lines \cite[Example 5]{struwe}.
The tau function~is
$$
\tau(x,y,t) \, = \,
a_0 \,+\,
a_1 \,{\rm exp}[u_1 x + v_1 y + w_1 t] \, + \,
a_2 \,{\rm exp}[u_2 x + v_2 y + w_2 t] \, + \,\cdots \,+\,
a_g \,{\rm exp}[u_g x + v_g y + w_g t] .
$$
We know from Corollary  \ref{cor:hiro} that the conditions imposed by Hirota's differential equation
(\ref{eq:hirotaF}) do not depend on ${\bf a}$ but only on ${\bf u},{\bf v},{\bf w}$.
 We thus consider the Hirota variety $\mathcal{H}_\mathcal{C}$ in  $\mathbb{WP}^{3g-1}$.

\begin{lemma} \label{lem:para1}
The Hirota variety $\mathcal{H}_\mathcal{C}$ is the union of two irreducible components of dimension $g$
in $\mathbb{WP}^{3g-1}$.
One of the two components has the following parametric representation:
\begin{equation}
\label{eq:para1}
 u_j \,\mapsto\, \kappa_j - \kappa_0 \, , \,\,\,
     v_j \, \mapsto \, \kappa_j^2 - \kappa_0^2 \, , \,\,\,
     w_j \, \mapsto \, \kappa_j^3 - \kappa_0^3 \quad {\rm for} \,\,\, j = 1,2,\ldots,g.
\end{equation}     
The other component is obtained from (\ref{eq:para1}) by the sign change $\,v_j \mapsto -v_j$ for $j=1,\ldots,g$.
\end{lemma}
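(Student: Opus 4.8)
My plan rests on Corollary~\ref{cor:hiro}. Since $\mathcal{C}=\{\mathbf{0},\mathbf{e}_1,\dots,\mathbf{e}_g\}$ is the vertex set of a simplex, every point of $\mathcal{C}^{[2]}$ is uniquely attained, so $\mathcal{H}_\mathcal{C}\subseteq\mathbb{WP}^{3g-1}$ is cut out by the $\binom{g+1}{2}$ quartics $P_{k\ell}$, with no dependence on $\mathbf{a}$. These come in two shapes: the $g$ vertex quartics $P(u_j,v_j,w_j)=u_j^4-4u_jw_j+3v_j^2$ from the pairs $(\mathbf{0},\mathbf{e}_j)$, and the $\binom{g}{2}$ edge quartics $P(u_k-u_j,\,v_k-v_j,\,w_k-w_j)$ from the pairs $(\mathbf{e}_j,\mathbf{e}_k)$. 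The identity driving the argument is
\[
P\bigl(\kappa-\lambda,\ \kappa^2-\lambda^2,\ \kappa^3-\lambda^3\bigr)
\,=\,(\kappa-\lambda)^2\bigl[(\kappa-\lambda)^2-4(\kappa^2+\kappa\lambda+\lambda^2)+3(\kappa+\lambda)^2\bigr]\,=\,0,
\]
so $P$ vanishes on every secant $\mu(\kappa)-\mu(\lambda)$ of the moment curve $\mu(\kappa)=(\kappa,\kappa^2,\kappa^3)$. Writing~(\ref{eq:para1}) as $(u_j,v_j,w_j)=\mu(\kappa_j)-\mu(\kappa_0)$, the base point cancels in each difference, hence every vertex and edge argument is a moment secant and all quartics vanish; thus~(\ref{eq:para1}) lies in $\mathcal{H}_\mathcal{C}$. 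As $P$ is even in its middle slot, $\mathcal{H}_\mathcal{C}$ is invariant under $\mathbf{v}\mapsto-\mathbf{v}$, and this involution carries~(\ref{eq:para1}) to the second family, which therefore also lies in $\mathcal{H}_\mathcal{C}$.

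Next I would verify that the two families, call them $V_1$ and $V_2$, are irreducible of dimension $g$. The map $(\kappa_0,\dots,\kappa_g)\mapsto\bigl(\mu(\kappa_j)-\mu(\kappa_0)\bigr)_j$ is a morphism out of the irreducible $\mathbb{A}^{g+1}$, so its image closure is irreducible. On the locus of distinct $\kappa_i$ the associated affine map is generically injective—from a representative one recovers $\kappa_0=\tfrac12(v_1/u_1-u_1)$ and then $\kappa_j=\kappa_0+u_j$—so the affine cone has dimension $g+1$. The weighted scaling $(u,v,w)\mapsto(c\,u,c^2v,c^3w)$ is exactly $\kappa_i\mapsto c\,\kappa_i$, so in $\mathbb{WP}^{3g-1}$ the image has dimension $g$. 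Moreover, on $\{\prod_j u_j\neq0\}$ the variety $V_1$ is characterized by $v_j/u_j-u_j\,(=2\kappa_0)$ being independent of $j$, whereas $V_2$ is characterized by $v_j/u_j+u_j$ being independent of $j$; for $g\ge2$ these conditions differ, so $V_1\neq V_2$.

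The heart of the proof is completeness, namely that $V_1\cup V_2$ exhausts $\mathcal{H}_\mathcal{C}$. I would argue first on the open set $\{\prod_j u_j\neq0\}$. There each vertex quartic is equivalent to $w_j=(u_j^4+3v_j^2)/(4u_j)$, and it also lets me solve $(u_j,v_j,w_j)=\mu(\kappa_j)-\mu(\lambda_j)$ with $(\kappa_j,\lambda_j)$ uniquely determined by $\kappa_j-\lambda_j=u_j$ and $\kappa_j+\lambda_j=v_j/u_j$. Substituting this expression for $w$ into the edge quartic for $(j,k)$ and clearing denominators, a direct computation collapses it to
\[
\bigl[\,u_j u_k(u_k-u_j)\,\bigr]^2\,=\,\bigl(v_k u_j-v_j u_k\bigr)^2,
\]
equivalently $u_k-u_j=\pm\bigl(v_k/u_k-v_j/u_j\bigr)$, which in the coordinates above says precisely that $\lambda_j=\lambda_k$ or $\kappa_j=\kappa_k$. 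A short combinatorial step finishes: if some pair has $\kappa_j\neq\kappa_k$, then their edge relation forces $\lambda_j=\lambda_k=:\lambda^\ast$, and for any further index $\ell$ at least one of the pairs $(j,\ell),(k,\ell)$ must take the $\lambda$-option, so $\lambda_\ell=\lambda^\ast$; hence all $\lambda$'s coincide. Thus every point of $\mathcal{H}_\mathcal{C}\cap\{\prod_j u_j\neq0\}$ has either all $\lambda_j$ equal, landing on $V_1$ with $\kappa_0=\lambda^\ast$, or all $\kappa_j$ equal, landing on $V_2$.

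It remains to treat the boundary. If $u_{j_0}=0$ at a point, the vertex quartic forces $v_{j_0}=0$, and the edge quartic against any index with $u_k\neq0$ forces $w_{j_0}=0$; so the whole index-$j_0$ block vanishes and the remaining coordinates satisfy the Hirota system of the smaller simplex on the indices $\neq j_0$. An induction on $g$, whose base case $g=1$ is the single irreducible quartic $\{P(u_1,v_1,w_1)=0\}$, places such points in $\overline{V_1}\cup\overline{V_2}$, via the evident embeddings of the two smaller families obtained by letting the index-$j_0$ parameter equal the common base point. Since $V_1,V_2\subseteq\mathcal{H}_\mathcal{C}$ are closed, irreducible, $g$-dimensional, distinct, and together cover $\mathcal{H}_\mathcal{C}$, they are exactly its two irreducible components. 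I expect the one genuinely delicate point to be the algebraic reduction of the edge quartic to the displayed secant condition; once that clean form is in hand, the combinatorial dichotomy, the dimension count, and the boundary analysis are all routine.
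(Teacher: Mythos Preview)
Your proposal is correct and follows essentially the same route as the paper's (very terse) proof: start from Corollary~\ref{cor:hiro}, parametrize each triple $(u_j,v_j,w_j)$ via the vertex quartic as $\mu(\kappa_j)-\mu(\lambda_j)$, then observe that the edge quartics reduce to the dichotomy $\kappa_j=\kappa_k$ or $\lambda_j=\lambda_k$ and carry out the case analysis. Your explicit reduction of the edge quartic to $[u_ju_k(u_k-u_j)]^2=(v_ku_j-v_ju_k)^2$ is correct (multiplying through by $u_ju_k$ and simplifying yields $-3u_j^2u_k^2(u_k-u_j)^2+3(u_kv_j-u_jv_k)^2=0$), and your combinatorial propagation argument is exactly the ``analyzing all cases up to symmetry'' the paper leaves implicit.

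One small gap: your boundary analysis assumes there exists an index $k$ with $u_k\neq 0$ in order to force $w_{j_0}=0$. When \emph{all} $u_j=0$ (hence all $v_j=0$), the $w$-coordinates are unconstrained by the defining quartics, and this $\mathbb{P}^{g-1}$ is not hit by the parametrization at finite $\kappa$'s. It does lie in $\overline{V_1}$, though, via the limit $\kappa_0=s$, $\kappa_j=s+c_j/(3s^2)$ with $s\to\infty$, which gives $(u_j,v_j,w_j)\to(0,0,c_j)$. Once you insert this one-line limiting argument, your induction on $g$ goes through cleanly; the paper's sketch does not address this locus at all.
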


\begin{proof} By Corollary  \ref{cor:hiro}, the variety $\mathcal{H}_\mathcal{C}$ is defined by 
the quartics $P(u_i,v_i,w_i)$ and $ P(u_i-u_j, v_i-v_j, w_i-w_j)$. The first
$g$ quartics imply 
$u_j = \kappa_j - \kappa_{j+g}$,
$v_j = \kappa_j^2 - \kappa_{j+g}^2$,
$w_j = \kappa_j^3 - \kappa_{j+g}^3$ for $j=1,\ldots,g$.
Under these substitutions, the remaining $\binom{g}{2}$ quartics
factor into products of expressions $\kappa_i-\kappa_j$.
Analyzing all cases up to symmetry reveals the two components.
\end{proof}

Setting $t = \kappa_0$ and $\kappa_j = u_j + t$, the parameterization
 (\ref{eq:para1}) of $\mathcal{H}_\mathcal{C}$ can be written as follows:
\begin{equation}
\label{eq:para2}
 u_j \, \mapsto \, u_j \, , \,\,\,
     v_j \, \mapsto \, 2u_j t + u_j^2 \,, \,\,\,
     w_j \,\mapsto  \,3 u_j t^2 +3 u_j^2 t + u_j^3 \quad {\rm for} \,\,\, j = 1,2,\ldots,g.     
\end{equation}     

\begin{theorem} \label{thm:khovanskii}
The prime ideal of the  Hirota variety in (\ref{eq:para1}) is minimally generated~by
\begin{itemize}
\item[(a)] the $\binom{g}{2}$ cubics   $\,  \,  \underline{v_i u_j}-v_j u_i \,-\, u_i u_j (u_i-u_j)\,$ for $1 \leq i <  j \leq g$,
\item[(b)] the $g$ quartics $  \,     4 \underline{w_i u_i}-3 v_i^2  \,-\, u_i^4\,$ for $i=1,\ldots, g$,
\item[(c)] the $g(g-1)$ quartics 
$   \,   4 \underline{w_j u_i} - 3 v_i v_j \,+\, 3 u_i (u_i-u_j) v_j - u_i u_j^3 $ for $i \not= j$, and
\item[(d)] the $\binom{g}{2}$ quintics
$  \, 4 \underline{w_i v_j} - 4 w_j v_i \, \,+ 3 u_i v_j (v_j-v_i) + u_i v_j (u_j-u_i) (u_i-2 u_j) + u_i u_j^3 (u_i-u_j)$.
\end{itemize}
These $2g^2 - g$  ideal generators are a minimal Gr\"obner basis with the
 underlined leading terms.\end{theorem}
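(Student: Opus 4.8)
The plan is to work with the affine parametrization (\ref{eq:para2}), which presents the relevant component as the image of the map $\phi$ sending $(u_1,\dots,u_g,t)$ to the point with coordinates $u_j$, $\,u_j^2+2u_jt$, $\,u_j^3+3u_j^2t+3u_jt^2$ (here $t=\kappa_0$ and $\kappa_j=u_j+t$). This image is irreducible by Lemma~\ref{lem:para1}, so its vanishing ideal $I$ is prime, and the task is to show $I=J$, where $J$ is generated by (a)--(d). Throughout I fix the lexicographic order with $w_1\succ\cdots\succ w_g\succ v_1\succ\cdots\succ v_g\succ u_1\succ\cdots\succ u_g$. Under this order a one-line inspection gives that the leading monomials of (a),(b),(c),(d) are exactly the underlined $v_iu_j$, $w_iu_i$, $w_ju_i$, $w_iv_j$, and that together these are the $g^2+2\binom{g}{2}=2g^2-g$ monomials $\{w_au_b\}_{a,b}\cup\{w_iv_j\}_{i<j}\cup\{v_iu_j\}_{i<j}$. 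First I would confirm $J\subseteq I$ by substituting $\phi$ into each generator and checking that it vanishes identically in $\mathbb{K}[u_1,\dots,u_g,t]$; this is an elementary cancellation for all four families (e.g.\ $v_iu_j-v_ju_i$ collapses to $u_iu_j(\kappa_i-\kappa_j)=u_iu_j(u_i-u_j)$).

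The key step is the equality $I=J$, and the cleanest route is elimination, exploiting that $\phi$ fixes the $u_j$. Because of this, $I$ is the elimination ideal $\tilde I\cap\mathbb{K}[u,v,w]$, where
$$
\tilde I \,=\, \bigl\langle\, v_j-u_j^2-2u_jt,\ \ w_j-u_j^3-3u_j^2t-3u_jt^2 \ :\ j=1,\dots,g \,\bigr\rangle \,\subseteq\, \mathbb{K}[u,v,w,t]
$$
and one eliminates the single variable $t$. I would compute a Gr\"obner basis of $\tilde I$ for a block order in which $t$ is eliminated first and the remaining variables carry the order above. The two families of generators have leading monomials $u_jt$ and $u_jt^2$, and clearing $t$ from their $S$-polynomials returns, after reduction, precisely the four families: the pair $(f_j,f_k)$ gives the cubic (a), the pair $(f_i,h_i)$ the quartic (b), the mixed pair $(f_i,h_j)$ the quartic (c), and $(h_i,h_j)$ the quintic (d). After adjoining (a)--(d) one checks by Buchberger's criterion that the enlarged set is a Gr\"obner basis of $\tilde I$; the Elimination Theorem then shows that its $t$-free part, namely (a)--(d), is a Gr\"obner basis of $I$ and in particular generates it. This simultaneously yields the Gr\"obner basis claim, the stated leading terms, and $I=J$.

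For minimality I would argue on two levels. As a Gr\"obner basis the set is minimal because no leading monomial divides another, as one sees by comparing supports: the $w_au_b$ are pairwise incomparable, while $w_iv_j$ carries a $w$ but no $u$ and $v_iu_j$ carries a $u$ but no $w$. To see that (a)--(d) also \emph{minimally generate} the prime ideal $I$, I would use the weighted grading $\deg u_j=1,\ \deg v_j=2,\ \deg w_j=3$, under which (a) has degree $3$, (b),(c) degree $4$, and (d) degree $5$. Since $\phi$ is injective on linear and quadratic forms one checks $I_1=I_2=0$, so the $\binom{g}{2}$ cubics are forced. In degree $4$, any element of $\mathfrak{m}I$ has a leading monomial divisible by some $v_iu_j$ (it comes from $R_1\cdot I_3$), hence contains both a $v$ and a $u$ and can never equal $w_au_b$, so (b),(c) are genuine generators. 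In degree $5$, every element of $\mathfrak{m}I$ has a leading monomial divisible by some $u_k$ (the degree $3$ and $4$ generators all carry a $u_k$ in their leading monomial), whereas $w_iv_j$ has no $u$; thus (d) is not redundant. Hence none of the $2g^2-g$ polynomials can be omitted.

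The main obstacle is the Gr\"obner computation carried out uniformly in $g$: one must organize the $S$-polynomial reductions in $\tilde I$ (equivalently, a direct Buchberger check on (a)--(d)) into the finitely many index patterns determined by which variable two leading monomials share --- shared $w$, shared $v$, or shared $u$ --- and certify that each reduces to zero. The overlaps are few, but tracking the exact cofactors in the reductions that feed the quintics (d) is the delicate bookkeeping, and it is precisely there that the detailed lower-order terms in (c) and (d) are needed. The passage from ``minimal Gr\"obner basis of $\mathrm{in}(I)$'' to ``minimal generating set of $I$'' is a secondary subtlety, resolved by the degree-by-degree leading-monomial comparison above rather than by any general principle.
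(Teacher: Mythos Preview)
Your plan is viable and would establish the theorem, but it takes a genuinely different route from the paper's. You eliminate $t$ directly by running Buchberger on the graph ideal $\tilde I$ and organizing the S-polynomial reductions into finitely many index patterns uniform in $g$. The paper instead recognizes the parametrization (\ref{eq:para2}) as a \emph{Khovanskii basis} (SAGBI basis) with respect to $t$-degree: the leading forms $u_j\mapsto u_j$, $v_j\mapsto 2u_jt$, $w_j\mapsto 3u_jt^2$ parametrize the toric variety $\mathbb{P}^1\times\mathbb{P}^{g-1}$ in its $\mathcal{O}(2,1)$ embedding, whose toric ideal is known to have the binomial leading forms of (a)--(d) as a square-free Gr\"obner basis corresponding to a unimodular triangulation of $(2\Delta_1)\times\Delta_{g-1}$. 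The Khovanskii machinery \cite[Corollary~11.6]{gbcp}, \cite[Proposition~5.2]{KM} then lifts this automatically: once one checks that (a)--(d) vanish on the image---which by symmetry reduces to a single verification for $g=2$---the lifted polynomials form a Gr\"obner basis of the prime ideal. This replaces your uniform Buchberger verification by an appeal to an existing theory plus one finite check, and it supplies a geometric explanation (a toric degeneration) for why the leading terms are so structured. Your approach, by contrast, is self-contained and needs no SAGBI theory, and your degree-by-degree leading-monomial argument for minimal generation is in fact more explicit than anything the paper provides on that point.
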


\begin{proof}
Consider the subalgebra of $\mathbb{K}[t,u_1,\ldots,u_g]$
generated by the $3g$ polynomials in
the parametrization (\ref{eq:para2}). We  sort terms 
by $t$-degree. We claim that this is a 
{\em Khovanskii basis}, or {\em canonical basis}, as defined in 
\cite{KM} or \cite[Chapter 11]{gbcp}.
The parametrization given by
the leading monomials 
$\, u_j \mapsto u_j ,\,
     v_j \, \mapsto  2u_j t,\,
          w_j \mapsto  3 u_j t^2 \,$
          defines a toric variety, namely 
the embedding of
$\PP^1 \times \PP^{g-1}$,
into $\PP^{3g-1}$ by the very ample line bundle $\mathcal{O}(2,1)$.
Its toric ideal is generated by the leading binomials
$ v_i u_j-v_j u_i ,\,4 w_i u_i-3 v_i^2,\,
4 w_j u_i - 3 v_i v_j ,\,
w_i v_j - w_j v_i\,$ seen in (a)-(d).
In fact, by \cite[\S 14.A]{gbcp},
these $2g^2-g$ quadrics form a square-free Gr\"obner basis 
with underlined leading monomials. Under the correspondence in
\cite[Theorem 8.3]{gbcp}, this initial ideal corresponds to
a unimodular triangulation of the associated polytope $(2 \Delta_1) \times \Delta_{g-1}$.

One checks directly that the polynomials (a), (b), (c), (d) vanish
for (\ref{eq:para2}).
Since only two indices $i$ and $j$ appear, 
by symmetry, it suffices to do this check for $g=2$.
Hence the generators of the toric ideal are the leading binomials of certain
polynomials that vanish on the Hirota variety. 
By \cite[Theorem 2.17]{KM} or \cite[Corollary 11.5]{gbcp}, this proves
the Khovanskii basis property. Geometrically
speaking, we have constructed a toric degeneration from
the Hirota variety to a toric variety in $\mathbb{WP}^{3g-1}$.
Furthermore, 
using \cite[Proposition 5.2]{KM} or \cite[Corollary 11.6 (1)]{gbcp} we conclude that the polynomials in (a)-(d) are a
Gr\"obner basis for the prime ideal of (\ref{eq:para2}),
where the term order is chosen to select the underlined leading terms.
\end{proof}

Using the methods described above, we can compute the
Hirota variety $\mathcal{H}_\mathcal{C}$ for each of the
known Delaunay polytopes $\mathcal{C}$, starting
with those in Proposition \ref{prop:delaunaypolytopes}.
We did this above for the triangle, the square, the tetrahedron and  the cube.
Here is one more example.

\begin{example}[Triangular prism] \label{ex:triangprism}
Let $g=3$ and take $\theta_\mathcal{C}$ to be the six-term theta function 
$$  a_{000} \, + \, a_{100} \,{\rm exp}[z_1] \,+ a_{001} \,{\rm exp}[z_3]
\, + \, a_{101} \,{\rm exp}[z_1+z_3] \, + \,
 a_{011} \,{\rm exp}[z_2+z_3] \, + \, a_{111} \,{\rm exp}[z_1+z_2+z_3] .
$$
 The  prism $\mathcal{C}$ arises in  the degeneration
  as in Theorem \ref{degthetathm}
  from a smooth quartic to
 a cubic plus a line. This is the second diagram in Figures 1 and 2
 in \cite[page 11]{agostini&co}.
 The Hirota variety is cut out by four quartics in
 $u_i,v_i,w_i$, one for each edge direction, 
 plus three  relations  involving the $a_{ijk}$,
 one for each of the three quadrangle facets. The  edges
 from the two triangle facets define a reducible variety of codimension $3$.
  One  irreducible component is given~by
 $$
 \langle \, u_1^4 + 3v_1^2 - 4u_1 w_1,\,\,
 u_2^4 + 3v_2^2 - 4u_2w_2,\,\,
  u_1^2 u_2 + u_1 u_2^2 - u_2 v_1 + u_1 v_2 \rangle .$$
  Together with the four other relations, this defines
  an irreducible variety of codimension $4$ inside
  $(\mathbb{K}^*)^6 \times \mathbb{WP}^8$. That irreducible Hirota variety has  the  parametric representation
  $$  \begin{matrix} u_1 = \kappa_1-\kappa_2\,,\quad
 v_1 = \kappa_1^2- \kappa_2^2 \, , \quad
 w_1 = \kappa_1^3-\kappa_2^3 \, , \\
 u_2 = \kappa_2-\kappa_3 \, ,\quad
 v_2 = \kappa_2^2-\kappa_3^2 \, , \quad
 w_2 = \kappa_2^3 - \kappa_3^3\, , \\
 u_3 = \kappa_4- \kappa_5 \, , \quad
 v_3 = \kappa_4^2-\kappa_5^2\, ,\quad
 w_3 = \kappa_4^3-\kappa_5^3 \, , \\
a_{000} = (\kappa_1 - \kappa_4) \lambda_0  \, ,\,\quad
a_{100} = (\kappa_2- \kappa_4)\lambda_0 \lambda_1 \, ,\,\quad
a_{110} = (\kappa_3- \kappa_4)\lambda_0 \lambda_1 \lambda_2 \, , \\
a_{001} = (\kappa_1- \kappa_5) \lambda_0 \lambda_3 \, , \,\,\,
a_{101} = (\kappa_2- \kappa_5)\lambda_0 \lambda_1 \lambda_3 \, ,\,\,\,
a_{111} = (\kappa_3- \kappa_5)\lambda_0 \lambda_1 \lambda_2 \lambda_3 \,.
\end{matrix}
$$
This allows us to write the $\tau$-function
 as a $(2,5)$-soliton, with 
$ A \,\,\, = \,\,\, \begin{pmatrix} 
1 & 1 & 1  & 0 & 0 \\
0 & 0 & 0 & 1 & 1 
\end{pmatrix}.
$
The six bases of the matrix $A$ correspond to the six terms in $\theta_\mathcal{C}$,
in analogy to the cube (\ref{eq:cubetheta}).
\end{example}

\section{The Sato Grassmannian}
\label{sec4}

The Sato Grassmannian is a device for encoding all solutions to the KP equation.
Recall that  the classical Grassmannian  ${\rm Gr}(k,n)$ parametrizes
$k$-dimensional subspaces of $\mathbb{K}^n$.  It is a projective variety in
 $\PP^{\binom{n}{k}-1}$, cut out by 
 quadratic relations known as {\em Pl\"ucker relations}.
 Following \cite[Chapter 5]{INLA}, the Pl\"ucker coordinates
 $p_I$ are indexed by $k$-element subsets $I$ of $\{1,2,\ldots,n\}$.
 As is customary in Schubert calculus \cite[\S 5.3]{INLA},
 we identify these $\binom{n}{k}$ subsets  with partitions $\lambda$ that fit into
 a $k \times (n-k)$ rectangle. Such a partition $\lambda$ is a
sequence $(\lambda_1,\lambda_2, \ldots, \lambda_k)$  of integers that satisfy
$n-k \geq \lambda_1 \geq \lambda_2 \geq \cdots \geq \lambda_k \geq 0$.
The corresponding Pl\"ucker coordinate $c_\lambda = p_I$ is the 
maximal minor of a $k \times n$ matrix $M$ of unknowns, as in \cite[\S 5.1]{INLA},
where the columns are indexed by
$I = \{\lambda_k+1, \lambda_{k-1}+2, \ldots, \lambda_2 + k{-}1 , \lambda_1 + k \}$.
With this notation, the Pl\"ucker relations for ${\rm Gr}(k,n)$ 
are quadrics in the unknowns $c_\lambda$.

\begin{example} \label{ex:fivenine}
We revisit \cite[Example 5.9]{INLA} with Pl\"ucker coordinates indexed by partitions.
The Grassmannian ${\rm Gr}(3,6)$ is a $9$-dimensional
subvariety in $\PP^{19}$.
Its prime ideal is generated by $35$ Pl\"ucker quadrics.
These are found easily by the following two lines in {\tt Macaulay2} \cite{M2}:

\smallskip {\tt
R = QQ[c,c1,c11,c111,c2,c21,c211,c22,c221,c222,c3,c31,c311, \\
  \phantom{11}\qquad \qquad    c32,c321,c322,c33,c331,c332,c333];   \quad I = Grassmannian(2,5,R) 
}  

\smallskip

\noindent  The output consists of $30$ three-term relations, like
$  \underline{c_{211} c_{22}} - c_{21} c_{221} + c_2 c_{222} $
and five four-term relations, like
$ \underline{c_{221} c_{31}} - c_{21} c_{321} + c_{11} c_{331} + c \,c_{333}$.
These quadrics form a minimal Gr\"obner basis.
\end{example}

The {\em Sato Grassmannian} SGM is the zero set of the
Pl\"ucker relations in the unknowns $c_\lambda$, where we now
drop the constraint that $\lambda$ fits into a $k \times (n-k)$-rectangle.
Instead, we allow arbitrary partitions $\lambda$. 
What follows is the description of a minimal 
Gr\"obner basis for~SGM.

Partitions are order ideals in the
poset $\mathbb{N}^2$. The set of all order ideals,
ordered by inclusion, is a distributive lattice, known as {\em Young's lattice}.
Consider any two partitions $\lambda$ and $\mu$ that
are incomparable in Young's lattice. They fit into
a common $k  \times (n-k)$-rectangle, for some $k$ and $n$.
There is a canonical Pl\"ucker relation for ${\rm Gr}(k,n)$ that has
leading monomial $c_\lambda c_\mu$. It is known
that these {\em straightening relations} form a minimal Gr\"obner basis
for fixed $k$ and $n$.
This property persists as $k$ and $n-k$ increase, hence yielding a Gr\"obner basis for SGM.

The previous paragraph paraphrases the definition  in \cite{DE, Sato} of
the Sato Grassmannian as an  inverse limit 
of projective varieties. This comes from the diagram of maps
${\rm Gr}(k,n{+}1) \dashrightarrow {\rm Gr}(k,n) $ and
${\rm Gr}(k{+}1,n{+}1) \dashrightarrow {\rm Gr}(k,n)$, where these rational
maps are given by dropping the last index.
This corresponds to {\em deletion} and {\em contraction} in matroid theory
\cite[Chapter 13]{INLA}.
One checks that  the simultaneous inverse limit for $k\rightarrow \infty $ 
and $n-k \rightarrow \infty$ is well-defined.
The straightening relations in our equational description above are those in
  \cite[Example~4.1]{DE}.
  That they form a Gr\"obner basis is best seen using Khovanskii bases
  \cite[Example 7.3]{KM}.

We next present the parametric representation of SGM that is commonly used in KP theory.
Let $V=\mathbb{K}((z))$ be the field of formal Laurent series with coefficients in our ground field $\mathbb{K}$.
Consider the natural projection map $\, \pi\colon V \to \mathbb{K}[z^{-1}] \,$
onto the polynomial ring in $z^{-1}$. We regard $V$ and $ \mathbb{K}[z^{-1}] $ as
 $\mathbb{K}$-vector spaces, with Laurent monomials $z^i$ serving as bases.
Points in the Sato Grassmannian {\rm SGM} correspond to
	  $\mathbb{K}$-subspaces $U\subset V$ such that
\begin{equation}
\label{eq:kernelcokernel}
	\dim \operatorname{Ker} \pi_{|U} \,\,= \,\, \dim \operatorname{Coker } \pi_{|U} ,
\end{equation}	
	and this common dimension is finite.
We can represent $U \in {\rm SGM}$ via a doubly infinite matrix as follows. For any basis $(f_1,f_2,f_3,\dots)$ of $U$,
 the $j$th basis vector is a Laurent series,
\[
 f_j(z) \,\,=\,\, \sum_{i=-\infty}^{+\infty} \xi_{i ,j}z^{i+1}.
\]
Then $U$ is the column span of the infinite matrix $\xi = (\xi_{i,j})$ whose rows are indexed from top to bottom by $\ZZ$ and whose
columns are indexed from right to left by $\NN$. The $i$-th row of $\xi$ corresponds to the coefficients of $z^{i+1}$.
Sato proved that a subspace $U$ of $V$ satisfies (\ref{eq:kernelcokernel}) 
if and only if there is a basis, called a \emph{frame} of $U$,  whose corresponding matrix has the shape
\begin{equation}
\label{eq:xishape}
\xi \,\,=\,\, \begin{small} \begin{pmatrix} \ddots  & \vdots & \vdots & \vdots & \vdots & \cdots & \vdots \\
\cdots & \mathbf{1} & 0 & 0 & 0 & \cdots & 0 \\ 
\cdots & * & \mathbf{1} & 0 & 0 & \cdots & 0 \\ 
\cdots & * & * & \xi_{-\ell,\ell} & \xi_{-\ell,\ell-1} & \cdots & \xi_{-\ell,1} \\ 
\cdots & * & * & \xi_{-\ell+1,\ell} & \xi_{-\ell+1,\ell-1} & \cdots & \xi_{-\ell+1,1} \\ 
{}  & \vdots & \vdots & \vdots & \vdots & \cdots & \vdots \\
\cdots & * & * & \xi_{-1,\ell} & \xi_{-1,\ell-1} & \cdots & \xi_{-1,1} \\
\cdots & * & * & \xi_{0,\ell} & \xi_{0,\ell-1} & \cdots & \xi_{0,1} \\ 
\cdots & * & * & \xi_{1,\ell} & \xi_{1,\ell-1} & \cdots & \xi_{1,1} \\
{}  & \vdots & \vdots & \vdots & \vdots & \cdots & \vdots 
\end{pmatrix}. \end{small}
\end{equation}
This matrix is infinite vertically, infinite on the left and, most importantly, it is eventually lower triangular with $1$ on the diagonal, at the $(-n,n)$ positions. 
The space $U$ is described by the positive integer $\ell$ and the submatrix with
$\ell$ linearly independent columns whose upper left entry is $\xi_{-\ell,\ell}$.
This description implies that a subspace
$U$ of $V$ satisfies (\ref{eq:kernelcokernel}) if and only~if
\begin{equation}
\label{eq:kernelcokernel2}
\text{there exists $\ell \in \mathbb{N}$ such that} \quad
\dim U\cap V_{n} \,\,=\,\, n+1 \quad \text{ for all } n\geq \ell,
\end{equation}
 where $V_n = z^{-n}\mathbb{K}[\![z ]\!]$ denotes the space of Laurent series with a pole of order at most $n$.
 
The Pl\"ucker coordinates on SGM are computed as minors
$\xi_\lambda$ of the matrix $\xi$.
Think of a partition $\lambda$ as a
  weakly decreasing sequence of nonnegative integers
that are eventually zero. 
Setting $m_i = \lambda_i - i$ for $i \in \NN$,
we obtain the  associated {\em Maya diagram}  $(m_1,m_2,m_3,\dots)$. This is a vector of strictly decreasing integers $m_1 > m_2 >  \dots$ such that $m_i = -i$ for large enough $i$.
Partitions and Maya diagrams are in natural bijection.
Given any partition $\lambda$, we consider the
matrix $(\xi_{m_i,j})_{i,j\geq 1}$ whose row indices $m_1,m_2,m_3,\dots$ are the entries
in the Maya diagram of $\lambda$.
 Thanks to the shape of the matrix $\xi$, it makes sense to take the determinant
\begin{equation}
\label{eq:SGMpara}
 \xi_{\lambda} \,\,:= \,\, \det(\xi_{m_i,j}). 
 \end{equation}
This Pl\"ucker coordinate is a scalar in $\mathbb{K}$ that can be
computed as a maximal minor of the finite matrix to the lower right of $\xi_{-\ell,\ell}$
in (\ref{eq:xishape}).
 We summarize our discussion as a theorem.

\begin{theorem} \label{thm:sgm} The Sato Grassmannian SGM is the inverse limit of the
classical Grassmannians ${\rm Gr}(k,n) \subset \PP^{\binom{n}{k}-1}$
as both $k$ and $n-k$ tend to infinity.
A parametrization of SGM is given by the matrix minors 
$c_\lambda = \xi_\lambda$ in (\ref{eq:SGMpara}), where $\lambda$ runs over all partitions.
The equations of SGM are the quadratic Pl\"ucker
relations, shown in  \cite[Example 4.1]{DE} and in Example \ref{ex:fivenine}.
\end{theorem}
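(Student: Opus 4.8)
The plan is to assemble the theorem from the three ingredients developed just above: the inverse-limit description, Sato's frame of shape (\ref{eq:xishape}), and the straightening relations, proved in that order. First I would make the inverse system precise. Index the classical Grassmannians by pairs $(k,n-k) \in \NN \times \NN$ and consider the two families of rational maps obtained by dropping the last index: the \emph{deletion} maps ${\rm Gr}(k,n{+}1) \dashrightarrow {\rm Gr}(k,n)$ and the \emph{contraction} maps ${\rm Gr}(k{+}1,n{+}1) \dashrightarrow {\rm Gr}(k,n)$. On Pl\"ucker coordinates each map forgets those $c_\lambda$ whose indexing partition no longer fits the smaller rectangle. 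The essential check is that deletion and contraction commute, so that the diagram over the poset $\NN \times \NN$ is filtered and its inverse limit does not depend on the order in which one lets $k \to \infty$ and $n-k \to \infty$. This is a diagram chase on index sets, using that both maps are restrictions of the same ``drop the last coordinate'' operation; it matches the definition in \cite{DE, Sato}.

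Second, I would identify this limit with the subspaces $U \subset V$ of (\ref{eq:kernelcokernel}). Given such a $U$, condition (\ref{eq:kernelcokernel2}) supplies an $\ell$ with $\dim(U \cap V_n) = n+1$ for all $n \geq \ell$; each intersection is a point of a finite Grassmannian, and the nesting $V_n \subset V_{n+1}$ makes these points compatible under the deletion and contraction maps, hence a point of the inverse limit. Conversely a compatible system reconstructs $U$ as the union of its finite-dimensional pieces. To exhibit coordinates I would invoke Sato's theorem that $U$ admits a frame of shape (\ref{eq:xishape}): the eventual lower-triangular pattern with $1$'s on the $(-n,n)$ diagonal guarantees that, for every partition $\lambda$, the row selection dictated by the Maya diagram $(m_1, m_2, \dots)$ yields a finite minor, so that $c_\lambda = \xi_\lambda$ in (\ref{eq:SGMpara}) is a well-defined scalar and agrees with the Pl\"ucker coordinate of the corresponding finite truncation.

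Third, for the equations I would show that the straightening relations cut out SGM and form a Gr\"obner basis. On each ${\rm Gr}(k,n)$ the canonical Pl\"ucker relation with leading monomial $c_\lambda c_\mu$, indexed by incomparable $\lambda,\mu$ in Young's lattice, gives a minimal Gr\"obner basis, as illustrated in Example \ref{ex:fivenine}; since such a relation valid on a smaller Grassmannian pulls back verbatim to a larger one, the union over all partitions cuts out the inverse limit. That this union remains a Gr\"obner basis, and not merely a generating set, is where I expect the real work to lie: the stability of the initial ideal across the bidirectional system is cleanest through the Khovanskii-basis formalism of \cite[Example 7.3]{KM}, which shows the leading-term toric degeneration is compatible with the deletion and contraction maps. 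I would therefore treat the commutativity of these maps with both the Pl\"ucker relations and their leading terms as the central technical step, and then read off the three claimed descriptions of SGM as immediate consequences.
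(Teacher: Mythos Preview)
Your proposal is correct and follows essentially the same route as the paper: the paper does not give a separate proof of this theorem but presents it explicitly as a summary of the preceding discussion, which sketches exactly the three ingredients you list (the inverse system via deletion and contraction maps as in \cite{DE,Sato}, the frame (\ref{eq:xishape}) yielding the minors $\xi_\lambda$, and the straightening relations with the Gr\"obner/Khovanskii-basis argument from \cite[Example 7.3]{KM}). Your write-up is a faithful and somewhat more detailed expansion of that sketch, including the commutativity check for the deletion/contraction square that the paper leaves implicit in the phrase ``one checks that the simultaneous inverse limit is well-defined.''
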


We now connect the Grassmannians above to our study of solutions to the KP equation.
Fix positive integers $k < n$ and a vector of parameters
$\kappa = (\kappa_1,\kappa_2,\ldots,\kappa_n)$.
For each $k$-element index set $I \in \binom{[n]}{k}$
we introduce an unknown $p_I$ that serves as a Pl\"ucker coordinate.
Our ansatz for solving (\ref{eq:hirotaF}) is now the following
 linear combination of exponential functions:
\begin{equation}\label{eq:solitonP}
\tau(x,y,t) \,\, = \,\,
\sum_{I \in \binom{[n]}{k}}
p_I \cdot \prod_{i,j \in I \atop i < j} (\kappa_j - \kappa_i) \cdot
{\rm exp} \biggl[\, x \cdot \sum_{i \in I} \kappa_i  \,  + \,
y \cdot \sum_{i \in I} \kappa_i^2 \,+\,
t \cdot \sum_{i \in I} \kappa_i^3 \,\biggr] .
\end{equation}

\begin{proposition} \label{thm:HirotaGrass}
The function $\tau$ is 
 a solution to
Hirota's equation (\ref{eq:hirotaF}) if and only if
the point $p = (p_I)_{I \in \binom{[n]} {k}} $ lies in
the Grassmannian ${\rm Gr}(k,n)$, i.e.~there is
a $k \times n$ matrix $A = (a_{ij})$ such that, for all $I \in \binom{[n]}{k}$,
the coefficient $p_I$ is the $k \times k$-minor
of $A$ with column indices $I$.
\end{proposition}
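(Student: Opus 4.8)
The plan is to recognize \eqref{eq:solitonP} as a Wronskian and then to match exponential coefficients after substituting it into Hirota's bilinear form. Set $\theta_j(x,y,t) = \kappa_j x + \kappa_j^2 y + \kappa_j^3 t$ and $f_j = \exp[\theta_j]$, so that $\partial_x f_j = \kappa_j f_j$ and hence $\partial_y f_j = \partial_x^2 f_j$ and $\partial_t f_j = \partial_x^3 f_j$. The Vandermonde evaluation of a Wronskian of exponentials,
\[
\operatorname{Wr}_x(f_{i_1},\dots,f_{i_k}) \,=\, \prod_{p<q}(\kappa_{i_q}-\kappa_{i_p})\cdot\prod_{r} f_{i_r},
\]
shows that the $I$-summand of \eqref{eq:solitonP} is exactly $p_I\cdot\operatorname{Wr}_x(\{f_i\}_{i\in I})$, so that $\tau = \sum_I p_I\,\operatorname{Wr}_x(\{f_i\}_{i\in I})$, the Wronskian being taken with respect to $x$.

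For the ``if'' direction I would write $p_I$ as the maximal minor on columns $I$ of a $k\times n$ matrix $A=(a_{ij})$ and set $g_\ell = \sum_j a_{\ell j} f_j$. The Cauchy--Binet formula then identifies $\tau$ with the single Wronskian $\operatorname{Wr}_x(g_1,\dots,g_k)$. Each $g_\ell$ again satisfies $\partial_y g_\ell = \partial_x^2 g_\ell$ and $\partial_t g_\ell = \partial_x^3 g_\ell$, so $\tau$ is a Wronskian of solutions of the first two flows of the linear heat hierarchy; the classical Wronskian construction of KP solutions (see \cite{Kodamabook}) then shows that $\tau$ satisfies \eqref{eq:hirotaF}.

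For the ``only if'' direction I would expand $\tau = \sum_I P_I\exp[\phi_I]$ with $P_I = p_I\prod_{i<j\in I}(\kappa_j-\kappa_i)$ and $\phi_I = \sum_{i\in I}\theta_i$, and apply the bilinear operator for $P(x,y,t)=x^4-4xt+3y^2$. Since $P(\partial)\exp[\phi_K]\bullet\exp[\phi_L] = P(\phi_K-\phi_L)\exp[\phi_K+\phi_L]$ and $P$ is even, one obtains
\[
P(\partial_x,\partial_y,\partial_t)\,\tau\bullet\tau \,=\, \sum_{K<L} 2\,P_K P_L\,P(\phi_K-\phi_L)\,\exp[\phi_K+\phi_L].
\]
For generic $\kappa$ the exponent $\phi_K+\phi_L$ depends only on, and determines, the $2k$-element multiset $M=K\uplus L$; hence \eqref{eq:hirotaF} forces $\sum_{K\uplus L = M}P_KP_L\,P(\phi_K-\phi_L)=0$ for every such $M$. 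The multisets $M = 2S\uplus\{a,b\}$ contribute an identically vanishing equation, because the single-edge dispersion identity gives $P(\phi_{\{a\}}-\phi_{\{b\}})=0$. The interesting multisets are $M = 2S\uplus\{a,b,c,d\}$ with $|S|=k-2$, whose only contributing pairs are the three splittings of $\{a,b,c,d\}$. Using the factorization
\[
P\bigl(\phi_{\{a,b\}}-\phi_{\{c,d\}}\bigr) \,=\, 12\,(\kappa_a-\kappa_c)(\kappa_a-\kappa_d)(\kappa_b-\kappa_c)(\kappa_b-\kappa_d)
\]
and cancelling the Vandermonde prefactor common to the three terms, this equation collapses to the three-term Pl\"ucker relation $p_{Sab}p_{Scd}-p_{Sac}p_{Sbd}+p_{Sad}p_{Sbc}=0$. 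As these relations cut out $\operatorname{Gr}(k,n)$ set-theoretically, we conclude $p\in\operatorname{Gr}(k,n)$.

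The main obstacle is the bookkeeping in the last step. One must establish the degree-four factorization displayed above (a short symmetric-function computation, whose constant is $12$, verified e.g.\ by comparing the coefficient of $\kappa_a^2\kappa_b^2$ and noting that $P$ vanishes whenever two of the $\kappa$'s in different pairs coincide), and then check that the three internal prefactors $(\kappa_b-\kappa_a)(\kappa_d-\kappa_c)$, $(\kappa_c-\kappa_a)(\kappa_d-\kappa_b)$ and $(\kappa_d-\kappa_a)(\kappa_c-\kappa_b)$ combine with this factorization to produce exactly the signs $+,-,+$ of the Pl\"ucker relation. One also needs the genericity of $\kappa$ both for the linear independence of the exponentials $\exp[\phi_I]$ and for the separation of exponents by their multisets, so that the coefficient matching is valid.
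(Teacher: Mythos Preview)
Your argument is correct. The paper's own proof is a one-line citation, ``This follows from \cite[Theorem~1.3]{Kodamabook},'' so your proposal is considerably more explicit. For the ``if'' direction you and the paper end up in the same place (Kodama's Wronskian construction), but for the ``only if'' direction you supply a genuine argument that the paper outsources: you extract the three-term Pl\"ucker relations directly by matching the coefficients of $\exp[\phi_K+\phi_L]$ in the Hirota bilinear expansion and invoking the factorization
\[
P\bigl(\phi_{\{a,b\}}-\phi_{\{c,d\}}\bigr)=12\,(\kappa_a-\kappa_c)(\kappa_a-\kappa_d)(\kappa_b-\kappa_c)(\kappa_b-\kappa_d).
\]
Your sign bookkeeping is correct: the three splittings of $\{a,b,c,d\}$ contribute $+\Delta,-\Delta,+\Delta$ times the full Vandermonde on four letters, yielding exactly $p_{Sab}p_{Scd}-p_{Sac}p_{Sbd}+p_{Sad}p_{Sbc}=0$. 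Note that you do not need to analyse the equations coming from multisets with $|K\triangle L|\ge 6$: the three-term relations already pin down $\mathrm{Gr}(k,n)$ set-theoretically, and the ``if'' direction guarantees that those higher equations are automatically satisfied once $p\in\mathrm{Gr}(k,n)$. The genericity hypothesis on $\kappa$ that you flag (pairwise distinct, and the sums $\sum_{i\in I}\kappa_i^r$ separating the multisets) is indeed implicit in the statement and is the standing assumption throughout this section of the paper.
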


\begin{proof} This follows from \cite[Theorem 1.3]{Kodamabook}. \end{proof}

We define a  {\em $(k,n)$-soliton} to be any function $\tau(x,y,t)$
where $\kappa \in \RR^n$ and $p \in {\rm Gr}(k,n)$.
Even the case $k=1$ is interesting. Writing
 $A = \begin{pmatrix} a_1 & a_2 & \cdots & a_n \end{pmatrix}$, the $(1,n)$-soliton equals
$$
\tau(x,y,t) \,\, = \,\, \sum_{i=1}^n a_i \,
{\rm exp} \bigl[\, x \cdot  \kappa_i  \,  + \,
y \cdot \kappa_i^2 \,+\,
t \cdot  \kappa_i^3 \,\bigr] .
$$
If we now set $n=g+1$ and we divide the sum above by its first exponential term then we obtain
the tau function that was associated with the $g$-simplex in Lemma~\ref{lem:para1}.
Hence the KP solutions that arise when the Delaunay polytope is a simplex
are precisely the $(1,n)$-solitons.

We next derive the Sato representation in \cite[Definition 1.3]{Kodamabook},
that is, we  express $\tau(x,y,t)$ as a linear combination of Schur polynomials.
Let $\lambda$ be a partition with at most three parts, written
$\lambda_1 \geq \lambda_2 \geq \lambda_3  \geq 0 $.
Following \cite[\S 1.2.2]{Kodamabook},  the associated Schur polynomial
 $\sigma_\lambda(x,y,t)$ can be defined as follows. We first introduce the
	\emph{elementary Schur polynomial} $\varphi_j(x,y,t)$  by the series 
$ {\rm exp} [\, x \lambda + y \lambda^2 + t \lambda^3] = \sum_{j=0}^{\infty} \varphi_j(x,y,t)\lambda^j $.
The \emph{Schur polynomial} $\sigma_\lambda $ for the partition $\lambda = (\lambda_1,\lambda_2,\lambda_3)$ 
is the determinant of the Jacobi-Trudi matrix of size $ 3 \times 3$:
$$ \sigma_{\lambda}(x,y,t) \,\,=\,\, \det \bigl(\varphi_{\lambda_i-i+j}(x,y,t) \bigr)_{1\leq i,j \leq 3} . $$
 To be completely explicit, we list 
Schur polynomials for partitions $\lambda$ with $\lambda_1 + \lambda_2 + \lambda_3 \leq 4$:
$$ \begin{matrix}
\sigma_\emptyset = 1, \quad \sigma_1 = x, \quad
\sigma_{11} = \frac{1}{2}x^2-y,\quad
\sigma_2 = \frac{1}{2}x^2+y,\, 
\sigma_{111} = \frac{1}{6}x^3-xy+t, \quad
\sigma_3 = \frac{1}{6} x^3+x y+t,
\smallskip \\
\sigma_{21} = \frac{1}{3}x^3-t, 
 \, 
\sigma_{211} = \frac{1}{8} x^4-\frac{1}{2}x^2 y-\frac{1}{2}y^2,\,
\sigma_{22} = \frac{1}{12} x^4-t x+y^2, \, 
\sigma_{31} = \frac{1}{8}x^4+\frac{1}{2}x^2y-\frac{1}{2}y^2,\, \ldots
\end{matrix}
$$
For a partition $\lambda$ as above, we set $\lambda_4 = \cdots = \lambda_k = 0$.
For $I = \{ i_1 < i_2 < \cdots < i_k \}$ we set
$$ \Delta_\lambda( \kappa_i, i \in I) \quad := \quad
{\rm det} \begin{small} \begin{pmatrix}
\kappa_{i_1}^{\lambda_{1}+k-1} & \kappa_{i_2}^{\lambda_{1}+k-1} & \cdots & \kappa_{i_k}^{\lambda_{1}+k-1} 
\smallskip \\
\kappa_{i_1}^{\lambda_{2}+k-2} & \kappa_{i_2}^{\lambda_{2}+k-2} & \cdots & \kappa_{i_k}^{\lambda_{2}+k-2} \\
\vdots & \vdots & \ddots & \vdots \smallskip \\
\kappa_{i_1}^{\lambda_{k}} & \kappa_{i_2}^{\lambda_{k}} & \cdots & \kappa_{i_k}^{\lambda_{k}} 
\end{pmatrix}. \end{small}
$$
The empty partition gives the Vandermonde determinant
$\Delta_{\emptyset}(\kappa_i , i\in I) = \prod_{i,j \in I \atop i < j} (\kappa_j - \kappa_i).$

\begin{lemma} 
\label{lem:hall} The exponential function indexed by $I$ in the formula
(\ref{eq:solitonP}) has the expansion
$$ 
{\rm exp} \biggl[\, x \cdot \sum_{i \in I} \kappa_i  \,  + \,
y \cdot \sum_{i \in I} \kappa_i^2 \,+\,
t \cdot \sum_{i \in I} \kappa_i^3 \,\biggr] \,\, = \,\, \Delta_\emptyset(\kappa_i,i \in I)^{-1} \cdot \!\!\!\!\!\!
\sum_{\lambda_1 \geq \lambda_2 \geq \lambda_3 \geq 0} \!\! \!\! \!\! \Delta_\lambda (\kappa_i, i \in I) \cdot \sigma_\lambda(x,y,t).
$$
\end{lemma}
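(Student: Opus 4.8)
The plan is to prove the identity by the Cauchy--Binet formula, turning the commutative product on the left into a single determinant and then factoring that determinant as a product of two infinite matrices whose minors are exactly the Jacobi--Trudi determinants $\sigma_\lambda$ and the bialternants $\Delta_\lambda$.

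First I would rewrite the left-hand side. Since the exponent is additive over $i\in I$, the exponential factors as $\prod_{i\in I}\exp[x\kappa_i+y\kappa_i^2+t\kappa_i^3]$, and the defining series $\exp[x\kappa+y\kappa^2+t\kappa^3]=\sum_{j\ge 0}\varphi_j(x,y,t)\,\kappa^j$ turns each factor into the power series $f(\kappa_i)=\sum_{j\ge0}\varphi_j\kappa_i^{\,j}$. Writing $k=|I|$ and $I=\{i_1<\cdots<i_k\}$, I multiply by the Vandermonde $\Delta_\emptyset(\kappa_i,i\in I)=\det(\kappa_{i_q}^{\,k-p})_{1\le p,q\le k}$, absorbing the factor $f(\kappa_{i_q})$ into the $q$-th column to obtain the single determinant
\[
\Delta_\emptyset\cdot\prod_{q=1}^{k} f(\kappa_{i_q})
\;=\;\det\Bigl(\textstyle\sum_{j\ge0}\varphi_j\,\kappa_{i_q}^{\,k-p+j}\Bigr)_{1\le p,q\le k}.
\]
All manipulations stay at the level of formal power series in $x,y,t$: grading by the weight $\mathrm{wt}(x)=1,\ \mathrm{wt}(y)=2,\ \mathrm{wt}(t)=3$ makes each $\varphi_j$ isobaric of weight $j$, so in each fixed weight only finitely many terms occur and every infinite sum below is well defined.

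Next I recognize the entries as a matrix product. Setting $r=k-p+j$, the $(p,q)$ entry equals $\sum_{r\ge 0}\varphi_{r-k+p}\,\kappa_{i_q}^{\,r}$ (terms with negative index vanish), which is the $(p,q)$ entry of $A\cdot B$ for the $k\times\infty$ matrix $A=(\varphi_{r-k+p})_{p,\,r\ge0}$ and the $\infty\times k$ matrix $B=(\kappa_{i_q}^{\,r})_{r\ge0,\,q}$. Cauchy--Binet then gives
\[
\det(AB)\;=\;\sum_{r_1>r_2>\cdots>r_k\ge0}
\det(\varphi_{r_s-k+p})_{p,s}\cdot\det(\kappa_{i_q}^{\,r_s})_{s,q},
\]
valid termwise in each weight. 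Applying the standard bijection $r_p=\lambda_p+k-p$ between strictly decreasing nonnegative sequences and partitions $\lambda=(\lambda_1\ge\cdots\ge\lambda_k\ge0)$, the factor $\det(\kappa_{i_q}^{\,r_s})_{s,q}$ becomes precisely the bialternant $\Delta_\lambda(\kappa_i,i\in I)$, while $\det(\varphi_{r_s-k+p})_{p,s}=\det(\varphi_{\lambda_s-s+p})_{p,s}$ is the transpose of the Jacobi--Trudi matrix and hence equals $\sigma_\lambda(x,y,t)$ (its $k\times k$ version, which coincides with the $3\times3$ one whenever $\ell(\lambda)\le3$). Dividing by $\Delta_\emptyset$ yields the claimed expansion, the summation running over partitions with at most $|I|$ parts.

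I expect the only real subtlety to be the passage from the product to a single determinant and the correct set-up of the two infinite matrices together with the index bijection $r_p=\lambda_p+k-p$; once these are in place, Cauchy--Binet and the determinantal definitions of $\Delta_\lambda$ and $\sigma_\lambda$ finish the argument mechanically. A shorter but less self-contained alternative would be to invoke the Cauchy identity $\exp[\sum_m \tfrac1m p_m(a)p_m(b)]=\sum_\lambda s_\lambda(a)s_\lambda(b)$ specialized at $p_1(b)=x,\ p_2(b)=2y,\ p_3(b)=3t,\ p_{\ge4}(b)=0$, using $s_\lambda(\kappa_i,i\in I)=\Delta_\lambda/\Delta_\emptyset$; I prefer the Cauchy--Binet route because it produces the determinants $\Delta_\lambda$ and $\sigma_\lambda$ that appear verbatim in the statement.
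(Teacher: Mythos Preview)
Your Cauchy--Binet argument is correct and self-contained; the only detail you leave implicit is that for $k=|I|>3$ the extra terms with $\ell(\lambda)>3$ contribute zero because the $k\times k$ Jacobi--Trudi determinant for such $\lambda$ vanishes under the specialization $p_{\ge 4}=0$ (equivalently, $s_\lambda(r_1,r_2,r_3)=0$), which is why the sum collapses to $\lambda_1\ge\lambda_2\ge\lambda_3\ge 0$ as stated.

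The paper, however, takes exactly the route you describe at the end as the ``shorter but less self-contained alternative'': it substitutes $x=p_1(r)$, $y=\tfrac12 p_2(r)$, $t=\tfrac13 p_3(r)$, identifies $\sigma_\lambda(x,y,t)=s_\lambda(r)$ and $\Delta_\lambda/\Delta_\emptyset=s_\lambda(\kappa_i,i\in I)$, and then recognizes the resulting equality as the classical Cauchy identity, citing Stanley. So your primary approach is genuinely different from the paper's. What you gain is a proof that stays entirely within the determinantal objects $\Delta_\lambda$ and $\sigma_\lambda$ defined in the paper, avoiding the auxiliary $r$-variables and any appeal to an external identity; what the paper gains is brevity, reducing the lemma to a one-line specialization of a well-known result. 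Both arguments are standard, and it is worth noting that the Cauchy--Binet expansion you carry out is in fact one of the classical proofs \emph{of} the Cauchy identity, so the two routes are closer than they might first appear.
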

 
\begin{proof} 
The unknowns
 $x,y,t$ play the role of power sum symmetric functions in $r_1,r_2,\ldots$:
	\[ x \,=\, r_1+r_2+r_3 = p_1(r), \,\,\, y \,=\, \frac{1}{2}(r_1^2+r_2^2+r_3^2) = \frac{1}{2}p_2(r), 
	\,\,\, t \,=\, \frac{1}{3}(r_1^3+r_2^3+r_3^3) \,=\, \frac{1}{3}p_3(r). \]
	It suffices to prove the statement after this substitution.
By \cite[Remark 1.5]{Kodamabook}, we have $\sigma_{\lambda}(x,y,t) = s_{\lambda}(r_1,r_2,r_3)$, where $s_{\lambda}$ is the usual Schur function as a symmetric polynomial, which satisfies
 $\Delta_\lambda(\kappa_i, i\in I) = s_{\lambda}(\kappa_i, i\in I)\cdot \Delta_{\emptyset}(\kappa_i,i\in I)$. 
 Our identity can be rewritten as
	\[ \exp\left[\!\, p_1(w)\cdot p_1(\kappa) + \frac{1}{2} p_2(w)\cdot p_2(\kappa) + \frac{1}{3} p_3(w)\cdot p_3(\kappa) 
	\! \right] \,\,= \!\!\! \sum_{\lambda_1 \geq \lambda_2 \geq \lambda_3 \geq 0} \!\!\! \!
	s_{\lambda}(\kappa_i, i  \in I)\cdot s_{\lambda}(r_1,r_2,r_3) .\]
	This is precisely the classical Cauchy identity, as stated  in \cite[page 386]{Stanley}. 
\end{proof}

By substituting the formula in Lemma \ref{lem:hall} into the right hand side of (\ref{eq:solitonP}),
we obtain :

\begin{proposition}
The $(k,n)$-soliton has the following expansion into Schur polynomials
\begin{equation}
\label{eq:solitonexpansion}
 \qquad \tau(x,y,t) \,\,\,\, = 
\sum_{\lambda_1 \geq \lambda_2 \geq \lambda_3 \geq 0} \!\!\!\! c_\lambda \cdot \sigma_\lambda(x,y,t)\, ,\qquad
{\rm where} \quad c_\lambda = 
\sum_{I \in \binom{[n]}{k}} \!
p_I \cdot \Delta_\lambda(\kappa_i, i \in I).
\end{equation}
\end{proposition}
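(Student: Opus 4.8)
The plan is to substitute the expansion from Lemma~\ref{lem:hall} directly into the defining formula (\ref{eq:solitonP}) and then collect terms. First I would observe that the Vandermonde product appearing in (\ref{eq:solitonP}) is precisely the empty-partition determinant: $\prod_{i,j \in I,\, i<j}(\kappa_j - \kappa_i) = \Delta_\emptyset(\kappa_i, i \in I)$. This is exactly the factor that Lemma~\ref{lem:hall} produces in inverse form as a prefactor to the exponential indexed by $I$, so the two occurrences of $\Delta_\emptyset(\kappa_i, i\in I)$ cancel against one another.

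After this cancellation, the summand of (\ref{eq:solitonP}) indexed by $I$ reads $p_I \sum_{\lambda} \Delta_\lambda(\kappa_i, i \in I)\,\sigma_\lambda(x,y,t)$, where $\lambda$ ranges over partitions with at most three parts. Summing over all $I \in \binom{[n]}{k}$ and interchanging the (finite) sum over $I$ with the sum over $\lambda$ groups the result by Schur polynomial, giving $\tau = \sum_\lambda \bigl(\sum_I p_I\,\Delta_\lambda(\kappa_i, i\in I)\bigr)\,\sigma_\lambda$. Reading off the coefficient of each $\sigma_\lambda$ yields $c_\lambda = \sum_{I} p_I\,\Delta_\lambda(\kappa_i, i\in I)$, which is precisely the content of (\ref{eq:solitonexpansion}).

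Since the computation is essentially a one-line substitution, there is no serious obstacle; the only point meriting a word of justification is the interchange of summation, because the inner sum over $\lambda$ is infinite. This is legitimate in the ring of formal power series in $x,y,t$ graded by $\deg x = 1$, $\deg y = 2$, $\deg t = 3$: each $\sigma_\lambda$ is homogeneous of weighted degree $|\lambda|$, so only finitely many partitions contribute in any fixed degree, and the outer sum over $I$ is finite. Hence regrouping by $\lambda$ is valid degree by degree and no convergence issues arise.
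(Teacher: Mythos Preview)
Your argument is correct and is exactly the one the paper intends: the paper's entire justification is the single sentence ``By substituting the formula in Lemma~\ref{lem:hall} into the right hand side of (\ref{eq:solitonP}), we obtain'' the proposition. Your added remark about interchanging the sums via the weighted grading on $\sigma_\lambda$ is a nice extra detail that the paper omits.
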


For any point  $\xi$ in the Sato Grassmannian {\rm SGM} we now define a tau function as follows:
\begin{equation}
\label{eq:satotau}
 \tau_\xi(x,y,t) \,\,=\,\, \sum_{\lambda} \xi_{\lambda}\,\sigma_{\lambda}(x,y,t) .
 \end{equation}
 The sum is over all possible partitions. We can now state the main result of   Sato's theory.
 
  \begin{theorem}[Sato] \label{thm:sato} For any $\xi \in {\rm SGM}$,
  the tau function   $\tau_\xi$ satisfies Hirota's equation  (\ref{eq:hirotaF}).
\end{theorem}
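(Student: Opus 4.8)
The plan is to expand Hirota's bilinear operator in the Schur basis and to match the resulting quadratic relations among the coordinates $\xi_\lambda$ with the Pl\"ucker relations that cut out SGM (Theorem \ref{thm:sgm}). The infinite-dimensional statement is reduced to the finite Grassmannians via the soliton dictionary already in place: Proposition \ref{thm:HirotaGrass} and the Schur expansion (\ref{eq:solitonexpansion}).

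First I would record a grading bookkeeping fact. With the weights $\deg x = 1$, $\deg y = 2$, $\deg t = 3$, the Schur polynomials $\sigma_\lambda(x,y,t)$ with at most three parts are homogeneous of degree $|\lambda| = \lambda_1+\lambda_2+\lambda_3$ and form a $\mathbb{K}$-basis of $\mathbb{K}[x,y,t]$; indeed, at each degree $d$ their number equals the number of monomials $x^a y^b t^c$ with $a + 2b + 3c = d$. Let $H(\tau)$ denote the left-hand side of (\ref{eq:hirotaF}). Substituting $\tau_\xi = \sum_\lambda \xi_\lambda \sigma_\lambda$ gives a symmetric bilinear form
$$ H(\tau_\xi) \,=\, \sum_{\lambda,\mu} \xi_\lambda\, \xi_\mu \, B_{\lambda\mu}(x,y,t), $$
where $B_{\lambda\mu} = B_{\mu\lambda}$ is a fixed polynomial independent of $\xi$. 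Since the operator attached to $P(x,y,t) = x^4 - 4xt + 3y^2$ is homogeneous of degree $4$, each $B_{\lambda\mu}$ is homogeneous of degree $|\lambda| + |\mu| - 4$. Expanding the $B_{\lambda\mu}$ back in the Schur basis and collecting the coefficient of each $\sigma_\nu$, the identity $H(\tau_\xi) = 0$ becomes equivalent to a family of quadratic equations $R_\nu(\xi) = 0$, one per partition $\nu$, where each $R_\nu$ involves only the finitely many $\xi_\lambda$ with $|\lambda| \le |\nu| + 4$.

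The crux is to identify the universal relations $\{R_\nu = 0\}$ with the Pl\"ucker relations. Fix $k < n$ and a generic $\kappa \in \mathbb{K}^n$, and consider the linear map $L_\kappa \colon (p_I) \mapsto (c_\lambda)$ defined by $c_\lambda = \sum_I p_I \Delta_\lambda(\kappa_i, i \in I)$ as in (\ref{eq:solitonexpansion}), with $\lambda$ ranging over partitions inside the $k \times (n-k)$ rectangle. By Lemma \ref{lem:hall} we have $\Delta_\lambda = s_\lambda(\kappa_i, i\in I)\,\Delta_\emptyset$, so $L_\kappa$ is invertible for generic $\kappa$ because the matrix $\bigl(s_\lambda(\kappa_i, i\in I)\bigr)_{\lambda, I}$ is; and since it is induced by a linear change of coordinates on the underlying space, $L_\kappa$ carries Pl\"ucker coordinates to Pl\"ucker coordinates and maps ${\rm Gr}(k,n)$ isomorphically onto itself inside the corresponding truncation of SGM. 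Now Proposition \ref{thm:HirotaGrass} together with (\ref{eq:solitonexpansion}) says precisely that $H(\tau) = 0$ for $\tau = \sum_\lambda c_\lambda \sigma_\lambda$ if and only if $L_\kappa^{-1}(c) \in {\rm Gr}(k,n)$. Hence on this truncation the equations $R_\nu$ cut out exactly the image of the Grassmannian, so they agree with the Pl\"ucker relations. Letting $k, n \to \infty$ recovers every Pl\"ucker relation of SGM; conversely each $R_\nu$ lives at a finite level and is therefore a Pl\"ucker relation. This yields $\{R_\nu(\xi) = 0 \text{ for all } \nu\} \Leftrightarrow \xi \in {\rm SGM}$, and the theorem follows.

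The step I expect to be the main obstacle is this last identification: verifying that $L_\kappa$ is genuinely Pl\"ucker-compatible, i.e.\ arises from an honest linear automorphism of the ambient space so that it sends Grassmannians to Grassmannians, and that generic invertibility really lets one read off every Pl\"ucker relation level by level. This is the analytic heart of Sato's theory; everything else is formal bookkeeping with the grading and the Schur basis.
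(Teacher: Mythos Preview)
The paper does not prove this theorem at all: it is stated as Sato's classical result and the reader is referred to \cite[Theorem 1.3]{Kodamabook}. So there is no ``paper's own proof'' to compare against; your proposal has to stand on its own.

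Your strategy for the direction actually asserted, $\xi\in{\rm SGM}\Rightarrow H(\tau_\xi)=0$, is sound and can be made to work. The key point you flag as the ``main obstacle'' is in fact correct: the map $L_\kappa$ restricted to partitions in the $k\times(n{-}k)$ rectangle is the map on Pl\"ucker coordinates induced by the invertible $n\times n$ Vandermonde matrix $(\kappa_i^{\,j})_{1\le i\le n,\,0\le j\le n-1}\in GL_n$, cf.\ the Cauchy--Binet computation implicit in Remark~\ref{rmk:grassmaninsato}. Hence it is an automorphism of ${\rm Gr}(k,n)$ for distinct $\kappa_i$. Since each $R_\nu$ involves only $\xi_\lambda$ with at most three parts and $|\lambda|\le|\nu|+4$, for $k\ge 3$ and $n-k\ge|\nu|+4$ all relevant $\lambda$ sit in the rectangle; then $R_\nu$ vanishes on $L_\kappa({\rm Gr}(k,n))={\rm Gr}(k,n)$, hence lies in the Pl\"ucker ideal, hence vanishes on SGM. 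That gives the theorem.

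Two genuine issues remain in what you wrote. First, the concluding biconditional $\{R_\nu=0\ \forall\nu\}\Leftrightarrow\xi\in{\rm SGM}$ is too strong and your argument does not establish it: Proposition~\ref{thm:HirotaGrass} only tells you about points of the special soliton form $c=L_\kappa(p)$, not about arbitrary $c$, so the ``only if'' direction does not transfer. In fact the single Hirota equation (\ref{eq:hirotaF}) does not cut out SGM; one needs the entire KP hierarchy for that. Fortunately the theorem asserts only the forward implication, which is all your argument yields. Second, watch for circularity: the paper's proof of Proposition~\ref{thm:HirotaGrass} is itself a citation to \cite[Theorem 1.3]{Kodamabook}, which is essentially Sato's theorem. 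For your reduction to be non-circular you must invoke an independent proof of the ``if'' direction in the soliton case, e.g.\ the direct Wronskian/Binet--Cauchy verification that a Wronskian of exponentials satisfies (\ref{eq:hirotaF}). That computation is elementary and standard, but you should say so explicitly rather than lean on Proposition~\ref{thm:HirotaGrass} as a black box.
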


Actually, Sato's theorem is much more general.
From a frame $\xi$ as in (\ref{eq:xishape}), we can define
	\[ \tau(t_1,t_2,t_3,t_4,\dots)\,\, = \,\,\sum_{\lambda} \xi_{\lambda} \,
	\sigma_{\lambda}(t_1,t_2,t_3,t_4,\dots) \]
	The sum is over all partitions.
This  function in infinitely many variables is a solution to the \emph{KP hierarchy}, which is an infinite set of differential equations which generalize the KP equation. Moreover, every solution to the KP hierarchy arises from the Sato Grassmannian in this way. The tau functions that we consider here arise from the general case by setting 
	\[ t_1 = x, \quad t_2 =y, \quad t_3=t, \quad t_4=t_5=\dots = 0. \]
	We refer to \cite[Theorem 1.3]{Kodamabook} for a first introduction and numerous references.
We may also start with an ansatz
 $\,\tau(x,y,t) \,\,=\,\, \sum_{\lambda} c_{\lambda}\,\sigma_{\lambda}(x,y,t)$,
 and examine the quadratic equations in the unknowns $c_\lambda$ that are
 imposed by  (\ref{eq:hirotaF}). This leads to polynomials
 that  vanish on  {\rm SGM}.
 
 \begin{remark}\label{rmk:grassmaninsato}
 We can view  Proposition~\ref{thm:HirotaGrass}
 as a special case of Theorem \ref{thm:sato}, given that the
 Sato Grassmannian contains all
  classical Grassmannians ${\rm Gr}(k,n)$.
 Here is an explicit description.
 We fix distinct scalars  $\kappa_1,\ldots,\kappa_n$ in $\mathbb{K}^*$.
Points in ${\rm Gr}(k,n)$ are represented by matrices $A$ in $\mathbb{K}^{k \times n}$.
Following \cite[\S 3.1]{KX} and \cite[\S 2.2]{Nak}, we turn $A$ into an infinite matrix $\xi$ as in (\ref{eq:xishape}).
Let $\Lambda(\kappa)$ denote the $ \infty \times n$ matrix whose rows are
$ (\kappa_1^\ell, \kappa_2^\ell, \ldots, \kappa_n^\ell)$ for $\ell = 0,1,2,\ldots$.
We define $A(\kappa) := \Lambda(\kappa) \cdot A^T$.
This is the $\infty \times k$ matrix whose $j$th column is given by the coefficients of 
$$ \sum_{i=1}^n \frac{a_{ji}}{1- \kappa_i z} \,\, = \,\,\,
\sum_{\ell= 0}^\infty\, \sum_{i=1}^n \kappa_i^\ell  \,a_{ji} \cdot  z^\ell .
$$
This verifies \cite[Theorem 3.2]{Nak}. Indeed, the double infinite matrix representing $\tau$ equals
$$ \xi \,=\, \begin{bmatrix}
\,{\bf 1} & {\bf 0} \smallskip \\
\,{\bf 0}  & A(\kappa) \end{bmatrix},
$$
where ${\bf 0}$ and ${\bf 1}$ are  infinite zero and identity matrices.
In particular, the first nonzero row of $A(\kappa)$ is at the row $-k$ of $\xi$. 
The corresponding basis $(f_1,f_2,f_3,\dots)$ of the space $U$ is given~by
\begin{equation}\label{eq:basissoliton}  
f_{j} = \frac{1}{z^{k-1}} \sum_{i=1}^n \frac{a_{ji}}{1-\kappa_i z}, \quad \text{ for } j=1,\dots,k, \qquad f_j=\frac{1}{z^{j-1}}, \quad \text{ for } j\geq k+1.
\end{equation}
The Pl\"ucker coordinates $c_\lambda$ indexed by partitions with at most three parts
are certain minors of $A(\kappa)$, and these are expressed
in terms of maximal minors of $A$ by the formula in (\ref{eq:solitonexpansion}).
\end{remark}

\section{Tau Functions from Algebraic Curves}
\label{sec5}

Let $X$ be a smooth projective curve of genus $g$ defined over
a field $\mathbb{K}$ of characteristic zero. In this section we
show how certain Riemann-Roch spaces on $X$ define points
in the Sato Grassmannian {\rm SGM}. Using Theorem \ref{thm:sato},
we obtain KP solutions by choosing appropriate bases of these spaces.
 The relevant theory is known since the 1980s; see \cite{Kodamabook, KX, Sato}. We begin with the exposition in
\cite[\S 4]{Nak}. Our aim is to develop tools to carry this out in~practice.

Fix a divisor $D$ of degree $g-1$ on $X$ and
 a distinguished point $p \in X$, both defined over $\mathbb{K}$.
For any integer $n \in \NN$, we
consider the Riemann-Roch space $ H^0(X,D+np)$.
For $m < n$ there is an inclusion  $ H^0(X,D+mp) \subseteq H^0(X,D+np) $.
As $n$ increases, we obtain a space $H^0(X, D+\infty p)$ of
rational functions on the curve $X$ whose pole order at $p$ is unconstrained.

Let $z$ denote a local coordinate on $X$ at $p$. Each element in
$H^0(X, D+\infty p)$ has a~unique Laurent series expansion in $z$
and hence determines an element in $V = \mathbb{K} (( z))$. 
 Let $m=\operatorname{ord}_p(D)$ be the multiplicity of $p$  in $D$. Multiplication by $z^{m+1}$
defines  the $\mathbb{K}$-linear map:
\[ \iota\,\colon\,  H^0(X, D+\infty p) \,\rightarrow \,V, \qquad
s\,=\,\sum_{n\in \mathbb{Z}} s_nz^n\, \mapsto \, \sum_{n\in \mathbb{Z}}s_nz^{n+m+1}. \] 

\begin{proposition}[\text{\cite[Theorem 4.1]{Nak}}]
\label{prop:smoothcurves}
The space $U  =  \iota(H^0(X,D + \infty p)) \subset V$ 
lies in~SGM.
\end{proposition}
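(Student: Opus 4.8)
The plan is to verify the characterization of SGM membership given in the excerpt, namely condition~(\ref{eq:kernelcokernel2}): I must exhibit an integer $\ell \in \NN$ such that $\dim U \cap V_n = n+1$ for all $n \geq \ell$, where $V_n = z^{-n}\mathbb{K}[\![z]\!]$. Since $U = \iota(H^0(X,D+\infty p))$ and $\iota$ is multiplication by $z^{m+1}$ with $m = \operatorname{ord}_p(D)$, the subspace $U \cap V_n$ corresponds, under $\iota^{-1}$, to those rational functions in $H^0(X,D+\infty p)$ whose image has pole order at $p$ bounded by $n$. Tracking the shift carefully, a function $s$ with $\operatorname{ord}_p(s) \geq -(j+m)$ maps to $\iota(s)$ with $\operatorname{ord}_p \geq -(j-1)$, so the filtration of $U$ by pole order is exactly the image of the Riemann--Roch filtration $H^0(X, D+np)$.

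**First I would** translate the dimension count into the Riemann--Roch theorem. The key computation is
\begin{equation}
\label{eq:rrcount}
\dim U \cap V_n \,\, = \,\, \dim H^0(X, D + np) \,\, = \,\, \deg(D+np) - g + 1 + \dim H^1(X, D+np).
\end{equation}
Since $\deg D = g-1$, we have $\deg(D+np) = g-1+n$, so the leading term of~(\ref{eq:rrcount}) is $n$, and the Euler characteristic gives $\dim H^0 - \dim H^1 = n$. The goal $\dim U \cap V_n = n+1$ would then follow once I show that the $H^1$-term stabilizes appropriately; I expect it to vanish for large $n$, which accounts for the extra $+1$ coming from the shift by $\iota$ (the multiplication by $z^{m+1}$ contributes the offset that turns $n$ into $n+1$ in the indexing of~(\ref{eq:kernelcokernel2})).

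**The main technical point** is the vanishing $H^1(X, D+np) = 0$ for $n \gg 0$. This is standard: by Serre duality $H^1(X, D+np) \cong H^0(X, K_X - D - np)^{\vee}$, and once $\deg(D+np) > \deg K_X = 2g-2$, i.e. once $n > g-1$, the divisor $K_X - D - np$ has negative degree and its space of global sections is zero. Taking $\ell = g$ (or any $\ell \geq g$) then guarantees $\dim H^0(X, D+np) = \deg(D+np) - g + 1 = n$ for all $n \geq \ell$. I would then reconcile this with the target $n+1$ by being precise about how the local coordinate shift in $\iota$ realigns the index: the pole filtration of $U \subset V$ sits one step offset from the Riemann--Roch filtration, so the value $n$ in~(\ref{eq:rrcount}) becomes $n+1$ in the SGM indexing of~(\ref{eq:kernelcokernel2}).

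**The step I expect to be the main obstacle** is bookkeeping the index shifts correctly: aligning the pole-order grading on $H^0(X,D+\infty p)$, the multiplication-by-$z^{m+1}$ map $\iota$, and the definition of $V_n$ so that the Euler characteristic $n$ lands as $n+1$ in~(\ref{eq:kernelcokernel2}). Everything else is a direct invocation of Riemann--Roch and Serre duality. A clean way to organize this is to note that $\iota$ identifies the filtered piece $H^0(X,D+np)$ with $U \cap V_{n}$ under the correct normalization of $z$ and $m$, so that the equality $\dim U \cap V_n = \dim H^0(X,D+np)$ holds on the nose; then~(\ref{eq:rrcount}) together with the vanishing of $H^1$ finishes the proof. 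I would also remark that the finiteness and equality of $\dim \operatorname{Ker}\pi_{|U}$ and $\dim \operatorname{Coker}\pi_{|U}$ in~(\ref{eq:kernelcokernel}) is equivalent to the stabilization just established, so no separate argument is needed.
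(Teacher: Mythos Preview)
Your approach is essentially identical to the paper's: identify $U\cap V_n$ with a Riemann--Roch space, apply Riemann--Roch, and kill $H^1$ for large $n$ via Serre duality/degree. Two points need tightening.

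First, the off-by-one you flag as ``the main obstacle'' has a definite resolution, and your own pole computation already gives it. You computed that $s$ with $\operatorname{ord}_p(s)\geq -(j+m)$ maps to $\iota(s)$ with $\operatorname{ord}_p\geq -(j-1)$, i.e.\ $\iota(H^0(X,D+jp))\subseteq V_{j-1}$. Setting $j=n+1$ yields the exact identification
\[
U\cap V_n \,=\, \iota\bigl(H^0(X,D+(n+1)p)\bigr),
\]
not $H^0(X,D+np)$. This is what the paper writes in its equation~(\ref{eq:RR}). Then Riemann--Roch gives $h^0(X,D+(n+1)p)=(g-1+n+1)-g+1+h^1=n+1+h^1$, so no further shift is needed; the $n+1$ comes out directly once $h^1=0$, which holds for $n\geq g-1$. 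Your formulation that ``$\iota$ identifies $H^0(X,D+np)$ with $U\cap V_n$ on the nose'' is simply off by one and should be corrected rather than absorbed into a renormalization.

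Second, you are silently using that $\iota$ is injective when you equate $\dim U\cap V_n$ with the dimension of the Riemann--Roch space. The paper states this explicitly: a rational function on the irreducible curve $X$ is determined by its Laurent expansion at $p$. This is elementary but worth saying, since the later discussion of reducible curves (Proposition~\ref{prop:cond1and2}) shows that injectivity can genuinely fail.
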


\begin{proof}
The map $\iota$ is injective because a rational function on an irreducible curve $X$ is uniquely determined by its Laurent series.
Setting $V_n = z^{-n}\mathbb{K}[\![z ]\!] \subset V$ as in Section \ref{sec4}, we have
\begin{equation}
\label{eq:RR}
 \dim U\cap V_n  \,=\, h^0(X,D+(n+1)p) \,=\, n+1 \,+ \,h^1(X,D+(n+1)p).
\end{equation}
The second equality is the  Riemann-Roch Theorem, with ${\rm deg}(D) = g-1$. 
Hence (\ref{eq:kernelcokernel2})  holds provided
$ h^1(X,D+(n+1)p)=0$. This happens for $n\geq g-1$, by degree considerations.	
\end{proof}

Following \cite{Nak}, we examine the case $g=2$. 
A smooth curve of genus two is hyperelliptic:
\[ X \,=\, \left\{ y^2 = (x-\lambda_1)(x-\lambda_2)\cdots (x-\lambda_6) \right\}. \]
Here $\lambda_1,\lambda_2,\ldots,\lambda_6 \in \mathbb{K}$ are pairwise distinct.
Let $p$ be one of the two preimages of the point at infinity under the
  double cover $X \to \mathbb{P}^1$.
    Using the local coordinate $z=\frac{1}{x}$ at $p$, we write
\[ y \,=\, \pm\sqrt{(x-\lambda_1)\cdots (x-\lambda_6)} \,\,=\,
\, \pm \frac{1}{z^3}\cdot  \sum_{n=0}^{+\infty} \alpha_n z^n, \]
where $\alpha_0=1$ and the $\alpha_i$ are polynomials in  $\lambda_1,\dots,\lambda_6$.
We consider three kinds of divisors:
$$	D_0 = p \,, \,\,
		D_1 = p_1 \,\,\,\, {\rm and} \,\,\,\,
	D_2 = p_1 + p_2 - p, $$
where $p_1 = (c_1,y_1), \,p_2=(c_2,y_2)$ are general points on $X$.
 For $m\geq 3$, consider the~functions
$$ \begin{matrix}
  g_m(x) &= &\sum_{j=0}^m \alpha_j x^{m-j},&& &&  \\
  f_{m}(x,y) &=& \frac{1}{2}\left( x^{m-3}y + g_m(x) \right), & &  & & \\ 
  h_j(x,y) &= & \frac{f_3(x,y)-f_3(c_j,-y_j)}{x-c_j} & = &  \frac{y+g_3(x)-(-y_j+g_3(c_j))}{2(x-c_j)} \qquad \text{for}
  \,\,j=1,2.
\end{matrix}
$$
These rational functions are series in $z$ with coefficients that are polynomials in $\lambda_1,\dots,\lambda_6$.
We write $U_i $ for the image of the Riemann-Roch space $ H^0(X,D_i + \infty p)$ under the inclusion~$\iota$. 
 
\begin{lemma}[\hbox{\cite[Lemma 5.2]{Nak}}]
\label{lem:nak}
	 The set $\{1,f_3,f_4,f_5,\ldots\}$ is a basis of $U_0$,
the set $\{1,f_3,f_4,f_5,\ldots\} \cup \{ h_1\}$ is a basis of $\,U_1$, and
	  $\{1,f_3,f_4,f_5,\ldots\} \cup \{h_1,h_2\}$ is a basis of $\,U_2$.
\end{lemma}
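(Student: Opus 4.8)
The plan is to pin down the exact polar divisor of each listed function and then read off the lemma in two moves: linear independence from the pole orders, and spanning from the Riemann--Roch dimension count already established in Proposition~\ref{prop:smoothcurves}; throughout I identify a function with its image under the isomorphism $\iota$. \emph{Pole structure.} I would first expand each function at $p$ and at the conjugate point $p'$, the other preimage of $x=\infty$, where $y$ carries the opposite sign $-\tfrac{1}{z^3}\sum_{n}\alpha_n z^n$. At $p$ one has $x^{m-3}y=\sum_{n\ge 0}\alpha_n x^{m-n}$, whose nonnegative-power part is exactly $g_m(x)$; hence in $f_m=\tfrac12\bigl(x^{m-3}y+g_m\bigr)$ these parts reinforce to give leading term $x^m$, while at $p'$ the sign flip makes them cancel, leaving only strictly negative powers of $x$. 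Thus $f_m$ is regular off $p$, vanishes at $p'$, and has a pole of order exactly $m$ at $p$. For $h_j$ I would use that $x-c_j$ has simple zeros at $p_j=(c_j,y_j)$ and at $\bar p_j=(c_j,-y_j)$, whereas the numerator $f_3(x,y)-f_3(c_j,-y_j)$ is built to vanish at $\bar p_j$; so $h_j$ stays regular at $\bar p_j$, acquires a simple pole at $p_j$, and inherits from $f_3$ a pole of order $2$ at $p$ (the order-$3$ pole of the numerator over the simple pole of $x-c_j$). Consequently $1,f_3,f_4,\dots\in H^0(X,D_0+\infty p)$, and in addition $h_1\in H^0(X,D_1+\infty p)$ and $h_1,h_2\in H^0(X,D_2+\infty p)$.

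\emph{Linear independence.} Given a finite vanishing linear combination of the listed functions, I would first inspect the points $p_1$ and $p_2$: among all the functions only $h_1$ has a pole at $p_1$ and only $h_2$ has a pole at $p_2$, so their coefficients must vanish. What remains is a combination of $1$ and the $f_m$, which have pairwise distinct pole orders $0,3,4,5,\dots$ at $p$; the usual highest-order-term argument then forces every coefficient to be zero.

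\emph{Spanning.} Here I would invoke Proposition~\ref{prop:smoothcurves} together with equation~(\ref{eq:RR}). Since $\deg D_i=g-1=1$, the divisor $D_i+(n+1)p$ has degree $n+2>2g-2$, so it is nonspecial and $\dim\bigl(U_i\cap V_n\bigr)=h^0\bigl(X,D_i+(n+1)p\bigr)=n+1$ for all $n\ge 1$. Translating the bound on the pole order at $p$ through the shift $\iota(s)=z^{m+1}s$ with $m=\operatorname{ord}_p(D_i)$, one checks that for each $n\ge 2$ exactly $n+1$ of the listed functions lie in $U_i\cap V_n$. Being linearly independent, these $n+1$ functions form a basis of the $(n+1)$-dimensional space $U_i\cap V_n$; since such bases are nested and $U_i=\bigcup_n\bigl(U_i\cap V_n\bigr)$, their union is the claimed basis of $U_i$.

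The main obstacle is the pole-structure step, since everything downstream is bookkeeping once the exact polar divisors are known. The whole argument hinges on verifying that the truncation defining $g_m$ and the subtracted value $f_3(c_j,-y_j)$ are engineered precisely so that $f_m$ has no pole at $p'$ and $h_j$ has no pole at $\bar p_j$. I would also keep an eye on the borderline level $n=1$ for $D_2$: there the two functions $h_1,h_2$ share the same order-$2$ pole at $p$, and it is their difference $h_1-h_2$, of pole order $1$ at $p$, that accounts for the Riemann--Roch dimension; this is harmless for the basis statement but is the one place where a naive ``distinct pole orders'' count would mislead.
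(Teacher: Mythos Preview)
Your argument is correct. The paper itself does not prove this lemma; it is stated with a citation to \cite[Lemma~5.2]{Nak} and used as input, so there is no in-paper proof to compare against. Your approach---computing the exact polar divisor of each $f_m$ and $h_j$, deducing linear independence from the distinct pole locations and orders, and then matching the count against the Riemann--Roch dimension $\dim(U_i\cap V_n)=n+1$ for $n\ge 1$---is the natural one and is essentially what Nakayashiki does. Your cancellation analysis at $p'$ (for $f_m$) and at $\bar p_j$ (for $h_j$) is the heart of the matter and is carried out correctly; the observation about $h_1-h_2$ at level $n=1$ for $D_2$ is a nice sanity check but, as you note, unnecessary once you restrict the spanning argument to $n\ge 2$.
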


This lemma furnishes us with an explicit basis
for the $\mathbb{K}$-vector space $U$ in Proposition~\ref{prop:smoothcurves}.
This basis is a frame in the sense of Sato theory.
It gives us the matrix $\xi$ in (\ref{eq:xishape}), from which we compute
 the Pl\"ucker coordinates (\ref{eq:SGMpara}) and 
the tau function (\ref{eq:satotau}). 
This process is a symbolic computation over the ground field
$\mathbb{K}$. No numerics are needed.
For general curves of genus $g \geq 3$, the same is possible,
but it requires  computing  a basis for $U$, e.g.~using \cite{hess}.

Our approach differs greatly from the computation
of KP solutions from the curve $X$ via theta functions as in \cite{agostini&co, dubrovin, Krichever}.
That would require the computation of  the Riemann matrix of $X$,
which cannot be done over $\mathbb{K}$.
This is why we adopted the SGM approach in \cite{KX, Nak}.

We implemented the method  in
\verb|Maple| for $D_0 = p$ on hyperelliptic curves 
over  $\mathbb{K} = \mathbb{Q}(\epsilon)$.
If $\lambda $ is a partition with $n$ parts, then the Pl\"ucker coordinate
$\xi_\lambda$ is the minor given by the $n$ right-most columns 
of $\xi$ and the rows given by the first $n$ parts in the Maya diagram of $\lambda$. 
Since the tau function  (\ref{eq:satotau}) is an infinite sum over all partitions, our code 
does not  provide an exact solution to the Hirota equation (\ref{eq:hirotaF}).
Instead, it computes  the truncated tau function
\begin{equation}
\label{eq:taun}
\tau[ n ] \,\,:= \,\,\sum_{i=1}^n\sum_{\lambda \vdash i} \,\xi_{\lambda}\, \sigma_{\lambda}(x,y,t), 
\end{equation}
where $n$ is the order of precision. In our experiments we evaluated  (\ref{eq:taun})
up to $n=12$ on a range of hyperelliptic curves of genus $g=2,3,4$. 
The first non-zero $\tau[n]$ is $\tau[\,g\,]=\sigma_{(g)}(x,y,t)$ \cite[Proposition 6.3]{Nak}. 
When plugging
(\ref{eq:taun}) into the left hand side of (\ref{eq:hirotaF}), we get an expression 
in $x,y,t$ whose terms of low order vanish.
 The following facts were observed for this expression.
For $n > g+2$, the term of lowest degree has degree $n+g-3$, and the monomial 
that appears in that lowest  degree $n+g-3= 1,2,3,\ldots$ is 
$\,\, x,y,t,xt,yt,t^2,xt^2,yt^2,t^3,xt^3,yt^3,t^4,\dots$.

We use our \verb|Maple|  code to study $(k,n)$-solitons arising from the degenerations in \cite{Nak}. 
Namely, we explore the limit for $\e \to 0$ for hyperelliptic curves of genus $g=n-1$ given by
\begin{equation}\label{eq:goodfamily} 
y^2\,\,=\,\, (x-\kappa_1)(x-\kappa_1-\e)\cdot \cdots \cdot (x-\kappa_n)(x-\kappa_n- \e). 
\end{equation}
Set $ h(z)=(1-\kappa_1 z)\cdots(1-\kappa_n z)$.
For $\e \to 0$ the frame found in Lemma \ref{lem:nak} degenerates to
\begin{equation}\label{eq:degbasis}
U \, = \, \bigl\{ 1 , z^{-n}h(z),z^{-(n+1)}h(z),z^{-(n+2)}h(z),z^{-(n+3)}h(z),\dots\bigr\}.
\end{equation}
Observe that $z^{-n}h(z)$ gets expanded to 
$$ z^{-n} + z^{-(n-1)}\Bigl(- \sum_{i=1}^n \kappa_i\Bigr)+z^{-(n-2)}\Bigl(\sum_{1\leq i \leq j \leq n} \kappa_i \kappa_j\Bigr)+z^{-(n-3)}\Bigl(\sum_{1\leq i \leq j \leq l \leq n} \kappa_i \kappa_j \kappa_l\Bigr) + O(z^{-(n-4)}).$$
Following \cite[\S 7]{Nak}, one multiplies all elements in $U$ by  $h(z)^{-1}$
in order to obtain a soliton solution. 
  By \cite[Theorem 3.2]{Nak}, we obtain a $(1,n)$-soliton solution given by the  matrix 
 \[ A \,\,=\,\, \left( \Bigl(\prod_{i\neq 1} (\kappa_1 - \kappa_i) \Big)^{-1} \quad \Bigl(\prod_{i\neq 2} (\kappa_2 - \kappa_i) \Big)^{-1} \quad \cdots \quad \Bigl(\prod_{i\neq n} (\kappa_n - \kappa_i) \Big)^{-1} \right). \]
  \begin{example}
	The soliton  that arises from the genus $2$ curve given by the polynomial  $f_2(x)$ in (\ref{eq:twocurves}) is a $(1,3)$-soliton given by the matrix
  	$A = \left(\, \frac{1}{2} \quad -1 \quad \frac{1}{2}\,\right) $
  	and parameters $\kappa = ( 1,2,3)$.
  \end{example}

We computed the tau function
for a range of curves over $\mathbb{K} = \mathbb{Q}(\e)$.
Their limit as $\epsilon \rightarrow 0$ is not the same as the 
tau functions obtained from the
combinatorial methods in Section \ref{sec2}:

\begin{example}
	Let $X$ be the hyperelliptic curve of genus $3$ given by $y^2=f(x)$ where $f(x)$ is
	$$(x+1+\epsilon)(x+1+2\epsilon)(x+1+\epsilon+\epsilon^2)(x+1+2\epsilon+\epsilon^2)
	(x+2+\epsilon)(x+2+2\epsilon)(x+2+\epsilon+\epsilon^2)(x+2+2\epsilon+\epsilon^2) .$$
	In Figure \ref{genus3tree} we exihibit the subtree with 8
        leaves that arises from the $8$ roots of $f(x)$ and the
        corresponding metric graph of genus $3$ which maps to it
        under the hyperelliptic covering.

\begin{figure}[ht]
\centering
\begin{tikzpicture}
   \draw[fill] (3,0) circle [radius=0.1];
   \draw[fill] (5,0) circle [radius=0.1];
   \draw[fill] (1.5,.6) circle [radius=0.1];
   \draw[fill] (1.5,-.6) circle [radius=0.1];
   \draw[fill] (6.5,.6) circle [radius=0.1];
   \draw[fill] (6.5,-.6) circle [radius=0.1];
   \draw[ultra thick] (3,0)--(5,0);
   \draw[ultra thick](3,0)--(1.5,.6);
   \draw[ultra thick](3,0)--(1.5,-.6);
   \draw[ultra thick] (5,0)--(6.5,.6);
   \draw[ultra thick] (5,0)--(6.5,-.6);
   \draw[thin] (1.5,.6)--(1, 1);
   \draw[thin] (1.5,.6)--(1, .2);
   \draw[thin] (1.5,-.6)--(1, -.2);
   \draw[thin] (1.5,-.6)--(1, -1);
   \draw[thin] (6.5,.6)--(7, 1);
   \draw[thin] (6.5,.6)--(7, .2);
   \draw[thin] (6.5,-.6)--(7, -.2);
   \draw[thin] (6.5,-.6)--(7, -1);
   
   \node[color=magenta] at (4,0.25) {2};
   \node[color=magenta] at (2,0.65) {1};
   \node[color=magenta] at (2,-.2) {1};
   \node[color=magenta] at (6,0.65) {1};
   \node[color=magenta] at (6,-.2) {1};
   
   \draw[fill] (10,-1) circle [radius=0.1];
   \draw[fill] (10,1) circle [radius=0.1];
   \draw[fill] (14,-1) circle [radius=0.1];
   \draw[fill] (14,1) circle [radius=0.1];
   
   \draw[ultra thick] (10, 1) -- (14, 1);
   \draw[ultra thick] (10, -1) -- (14, -1);
   \draw [ultra thick] plot [smooth, tension=1] coordinates { (10,1) (9.5,0) (10,-1)};
   \draw [ultra thick] plot [smooth, tension=1] coordinates { (10,1) (10.5,0) (10,-1)};
   \draw [ultra thick] plot [smooth, tension=1] coordinates { (14,1) (13.5,0) (14,-1)};
   \draw [ultra thick] plot [smooth, tension=1] coordinates { (14,1) (14.5,0) (14,-1)};
   
   \node[color=magenta] at (13.3,0) {1};
   \node[color=magenta] at (14.7,0) {1};
   \node[color=magenta] at (9.3,0) {1};
   \node[color=magenta] at (10.7,0) {1};
   \node[color=magenta] at (12,1.35) {$\frac{1}{2}$};
   \node[color=magenta] at (12,-.65) {$\frac{1}{2}$};
\end{tikzpicture}
    	\caption{The metric tree (left) and the metric graph (right) for the 
    		curve $X$}\label{genus3tree}
\end{figure}
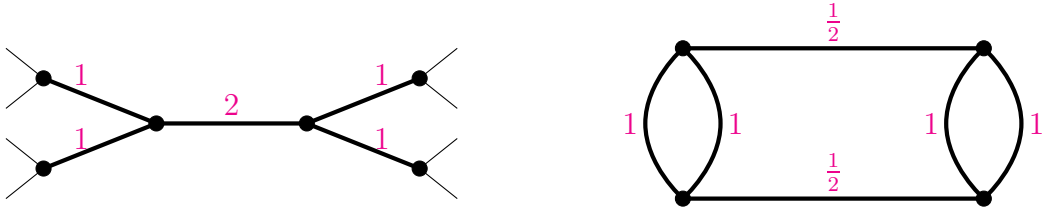

For a suitable cycle basis, the tropical Riemann matrix equals
	$Q = \begin{tiny}
    \begin{bmatrix}
    \, 2 & -1 & 0 \, \\
    \, -1 & 3 & -1 \, \\
    \, 0 & -1 & 2\,\\
    \end{bmatrix}\end{tiny}$.
    This appears in the second row in \cite[Table 4]{struwe}: the Voronoi polytope is
    the hexarhombic dodecahedron. This corresponds to the tropical degeneration from a smooth quartic to a conic and two lines in $\PP^2$. According to \cite[Theorem 4]{struwe} there are two types of  Delaunay polytopes in this case, namely the tetrahedron (4 vertices) and the pyramid (5 vertices). The theta function 
(\ref{eq:thetafunction}) for the tetrahedron equals
     $\,\theta_\mathcal{C}(z) \,\, = \,\,a_{000}+a_{100}\exp [z_1]+a_{010}\exp [z_2]+a_{001}\exp [z_3]$.
The Hirota variety lives in  $(\mathbb{K}^*)^4 \times \mathbb{WP}^8$,
and it is characterized by Theorem \ref{thm:khovanskii}. Each point on the Hirota variety
gives a KP solution. The theta function for  the pyramid equals
     $$\theta_{\mathcal{C}'}(z) \,\,=\,\, a_{000}+a_{100}\exp [z_1]+a_{001}\exp [z_3]+a_{101}\exp [z_1+z_3]+a_{111}\exp [z_1+z_2+z_3].$$
      The Hirota variety $\mathcal{H}_{\mathcal{C}'} \subset
      (\mathbb{K}^*)^5 \times \mathbb{WP}^8$ is cut out
      by eight quadrics $P_{ij}$ as in Section \ref{sec3}, plus
            $$
      P(u_1+u_3,v_1+v_3,w_1+w_3) \, a_{000} a_{101}\, \, + \,\,
      P(u_1,v_1,w_1)  \,a_{001} a_{101}  .
      $$
The resulting tau functions differ from those obtained by setting
$\e = 0$ in our \verb|Maple|  output. This happens because
$y^2=f(x)$ is not a semistable model. The special fiber of that curve at
 $\epsilon = 0$ does not have ordinary singularities: it has two singular points of the form $y^2=x^4$. On the other hand, if the curve at $\epsilon = 0$ is rational and has nodal singularities, as in \eqref{eq:goodfamily}, we do get soliton solutions at the limit. We shall see this more precisely in the next section.
\end{example}

After this combinatorial interlude, we now return to  Proposition  \ref{prop:smoothcurves},
and explore this for a singular curve $X$.
Suppose $X$ is connected, has arithmetic genus $g$, and all singularities are nodal.
We recall briefly how to compute $H^0(X,E)$ when $E$ is a divisor supported in
the smooth locus of $X$. If $X$ is irreducible, then we 
consider the normalization $\widetilde{X} \to X$, that separates the nodes of $X$. 
The divisor $E$ lifts to $\widetilde{X}$, and  $H^0(X,E)$ is a subspace of
$H^0(\widetilde{X},E)$. It consists of rational functions 
 which coincide on the points of $\widetilde{X}$ that map to the nodes of $X$. If $X=X_0\cup \dots \cup X_r$ is reducible,  then $H^0(X,E)$ is a subspace of $\bigoplus_{i=0}^r H^0(X_i,E_{|X_i})$. Its elements are tuples
  $(f_0,f_1,\dots,f_r)$  where $f_i$ and $f_j$ coincide on $X_i\cap X_j$. 

Fix a divisor $D$ of degree $g-1$ and a point $p$, where
all points are smooth on $X$ and defined over $\mathbb{K}$.
We wish to compute $H^0(X,D+\infty p)$.
Riemann-Roch holds for $X$ and hence so does 
(\ref{eq:RR}). In order for the
proof of Proposition \ref{prop:smoothcurves} to go through, we need two conditions:
\begin{enumerate}
 \item[($*$)] A rational function in $H^0(X,D{+}np)$ is uniquely determined by its Laurent series at $p$. 
 \item[($**$)] We have $\,h^1(X,D+np) = 0\,$ for $n\gg 0$.
\end{enumerate}
Our next result characterizes when these two conditions hold.
Let $X_0$ be the irreducible component of $X$ that contains $p$,  and let
$X_0' = \overline{X\backslash X_0}$ be the curve obtained removing $X_0$. 
Set $Z=X_0\cap X_0'$ and denote the restrictions of the divisor $D$ to $X_0,X_0'$ by $D_0,D_0'$ respectively.

\begin{proposition}
\label{prop:cond1and2}
Condition {\rm ($*$)} holds if and only if $H^0(X_0',D_0'-Z)=0$.
Condition {\rm ($**$)} holds if and only if $H^1(X_0',D_0')=0$.
These are vanishing conditions on the curve~$X_0'$.
\end{proposition}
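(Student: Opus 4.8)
The plan is to treat the two conditions separately, exploiting the decomposition $X = X_0 \cup X_0'$ glued along the finite node set $Z$. Throughout I will use the standard fact that a global section of a line bundle $L$ on a nodal curve is the same datum as a pair $(s_0, s_0')$ of sections on the two pieces that agree on $Z$, together with the observation that each point of $Z$ is a smooth point of both $X_0$ and $X_0'$ (the two branches of a node), so that restriction to $Z$ and the twist by $-Z$ are well defined on $X_0'$.

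For condition $(*)$, I would first note that the Laurent expansion $\iota$ at $p$ only records the $X_0$-component, because $p\in X_0$. Since $X_0$ is irreducible and $p$ is smooth, the map $H^0(X_0,D_0{+}np)\to \mathbb{K}((z))$ sending a function to its Laurent series at $p$ is injective. As $\iota$ factors through the projection $(s_0,s_0')\mapsto s_0$ followed by this injection, $\ker\iota$ consists exactly of the sections $(s_0,s_0')$ with $s_0=0$; the gluing condition then forces $s_0'|_Z = s_0|_Z = 0$, identifying
$$ \ker\iota \;\cong\; \{\, s_0'\in H^0(X_0',D_0') : s_0'|_Z = 0 \,\} \;=\; H^0(X_0',D_0'-Z). $$
This gives $(*)$ (injectivity of $\iota$) if and only if $H^0(X_0',D_0'-Z)=0$, and the same computation applies verbatim to $H^0(X,D{+}\infty p)$ since $p\notin X_0'$.

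For condition $(**)$ the main tool is the partial-normalization sequence $0\to\mathcal{O}_X\to\mathcal{O}_{X_0}\oplus\mathcal{O}_{X_0'}\to\mathcal{O}_Z\to 0$ (sheaves on the components identified with their pushforwards to $X$), tensored with the line bundle $\mathcal{O}_X(D{+}np)$:
$$ 0 \longrightarrow \mathcal{O}_X(D{+}np) \longrightarrow \mathcal{O}_{X_0}(D_0{+}np)\oplus\mathcal{O}_{X_0'}(D_0') \longrightarrow \mathcal{O}_Z \longrightarrow 0, $$
where I use $p\notin X_0'$ so the twist affects only the $X_0$-summand, $\mathcal{O}_{X_0'}(D_0')$ is independent of $n$, and $\mathcal{O}_X(D{+}np)|_Z\cong\mathcal{O}_Z$ because the fibre of a line bundle at a node is one-dimensional. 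The relevant tail of the long exact cohomology sequence reads
$$ H^0(X_0,D_0{+}np)\oplus H^0(X_0',D_0') \xrightarrow{\ \psi\ } H^0(\mathcal{O}_Z) \xrightarrow{\ \delta\ } H^1(X,D{+}np) \longrightarrow H^1(X_0,D_0{+}np)\oplus H^1(X_0',D_0') \longrightarrow 0. $$
I would then show that for $n\gg 0$ both $H^1(X_0,D_0{+}np)=0$ and the evaluation map $H^0(X_0,D_0{+}np)\to\mathcal{O}_Z$ is surjective, so that $\psi$ is surjective and $\delta=0$. Both follow from the ampleness of $\mathcal{O}_{X_0}(p)$ on the irreducible projective curve $X_0$ — a point has positive degree — via Serre vanishing applied to $\mathcal{O}_{X_0}(D_0{+}np)$ and to $\mathcal{O}_{X_0}(D_0{+}np-Z)$. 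With $\delta=0$ and $H^1(X_0,D_0{+}np)=0$, the edge map yields an isomorphism $H^1(X,D{+}np)\cong H^1(X_0',D_0')$ for all large $n$; since the right-hand side does not depend on $n$, condition $(**)$ holds if and only if $H^1(X_0',D_0')=0$.

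I expect the main obstacle to be the asymptotic step on the possibly-singular component $X_0$: one must justify Serre vanishing and the surjectivity of evaluation at $Z$ for large $n$ on an irreducible curve that may itself carry self-nodes, using arithmetic-genus Riemann--Roch and the ampleness of $\mathcal{O}_{X_0}(p)$ in place of smoothness. The two cohomological inputs for $(*)$ and $(**)$ are then exactly the vanishing conditions on $X_0'$ asserted in the proposition.
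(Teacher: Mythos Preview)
Your proposal is correct and follows essentially the same route as the paper: for $(*)$ you identify $\ker\iota$ with $H^0(X_0',D_0'-Z)$ via the restriction to $X_0$, and for $(**)$ you use the partial-normalization short exact sequence together with ampleness of $p$ on the irreducible curve $X_0$ to control the long exact sequence for $n\gg 0$. The only cosmetic difference is that the paper separates the two implications in $(**)$ (first using the surjection $H^1(X,D+np)\twoheadrightarrow H^1(X_0',D_0')$, then arguing sufficiency), whereas you package both into the single isomorphism $H^1(X,D+np)\cong H^1(X_0',D_0')$ for large $n$; the underlying inputs are identical.
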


\begin{proof}
If $X$ is irreducible then ($*$) holds since rational functions are determined
	by their series on the normalization $\widetilde{X} $.
If $X$ is reducible then $X_0'$ is nonempty.
	 We need that the~restriction
$$	H^0(X,D+np) \,\longrightarrow \,H^0(X_0,D_0+np) $$
	is injective. The kernel of this map consists exactly of those rational functions in $H^0(X_0',D_0')$ which vanish on $Z$. In other words, the kernel is the space $H^0(X_0',D_0'-Z)$, as desired.

Also for the second statement, we can take $X$ to be reducible.
Consider the exact sequence
	$$ 0 \,\longrightarrow \,\mathcal{O}_{X}(D+np)
	\, \longrightarrow\, \mathcal{O}_{X_0}(D_0+np) \oplus \mathcal{O}_{X_0'}(D_0') 
	\,\longrightarrow \,\mathcal{O}_Z(D_0+np) \, \longrightarrow\, 0. $$
	Taking global sections we see that $H^1(X,D+np)$ surjects onto $H^1(X_0',D_0')$, since
	${\rm dim}(Z) = 0$.
Hence $H^1(Z,D_0+np)=0$. In particular, if $H^1(X,D+np)= 0$ then $H^1(X_0',D_0') = 0$.
	
Conversely, suppose $H^1(X_0',D'_0)=0$. Since $X_0$ is irreducible, we have $H^1(X_0,D_0+np) = 0$ for $n\gg 0$.
	 The long exact sequence tells us that $H^1(X,D+np)=0$ as soon as the map
	\[ H^0(X_0,D_0+np) \oplus H^0(X_0',D_0') \longrightarrow H^0(Z,D+np) ,\qquad (f_0,f_0') \mapsto {f_0}_{|Z} - {f'_0}_{|Z} \]
	is surjective.  Actually, the map $H^0(X_0,D+np) \to H^0(Z,D+np)$ is surjective for $n\gg 0$. 
Indeed,
	$H^1(X_0,D-Z+np) = 0$ for $n\gg 0$ since  $p$ is an ample divisor on the  curve $X_0$.
\end{proof}

\begin{remark}
  Here we presented the case of nodal curves for simplicity, but the same discussion holds true, with essentially the same proofs, for an arbitrary singular curve.
\end{remark}

\begin{remark} The two conditions in 
Proposition \ref{prop:cond1and2} are automatically satisfied when the curve $X$ is irreducible. 
In that case we always get a point $U$ in the Sato Grassmannian.
\end{remark}

\section{Nodal Rational Curves}
\label{sec6}

Our long-term goal is to fully understand the points $U(\e)$
in the Sato Grassmannian that represent Riemann-Roch spaces of
a smooth curve over a valued field, such as $\mathbb{K}= \mathbb{Q}(\e)$.
We explained how these points are computed, and we implemented this in
\verb|Maple| for the case of hyperelliptic curves.
Our approach is similar to \cite{KX, Naktrig, Nak}.
For a given Mumford curve,
it remains a challenge to lift the computation
to the valuation ring (such as $\mathbb{Q}[\e]$) and
 correctly encode the limiting process as  $\e \rightarrow 0$.
 In this section we focus on what happens in the limit.

Consider a nodal reducible curve $X = X_0 \,\cup \,\cdots \,\cup \,X_r$,
where each irreducible component  $X_i$ is rational.
The arithmetic genus $g$ is  the genus of the dual graph.
We present an algorithm whose input is a divisor $D$ of degree $g-1$ and a point $p$,
supported in the smooth locus of $X$. The algorithm checks the
conditions in Proposition \ref{prop:cond1and2}, and, if these
are satisfied, it outputs a soliton solution that corresponds to $U = \iota(H^0(X,D+\infty p))$.

We start with some remarks on interpolation of rational functions on $\mathbb{P}^1$.
 Consider distinct points $\kappa_1,\dots,\kappa_a$ and 
 $\kappa_{1,1},\kappa_{1,2},\dots,\kappa_{b,1},\kappa_{b,2}$ 
 on $\mathbb{P}^1$. We also choose a divisor $D_0=m_1p_1+\dots+m_sp_s+m p$, 
 which is supported away from the previous points. Choose also~scalars 
 $\lambda_1,\dots,\lambda_a,\mu_1,\dots,\mu_b \in \mathbb{K}$. 
 We wish to compute all functions $f$ in $H^0(\mathbb{P}^1,D_0+\infty p)$ satisfying
\begin{equation}\label{eq:interpolation} 
 f(\kappa_j) = \lambda_j \,\text{ for }\, j=1,\dots,a \qquad {\rm and} \qquad f(\kappa_{j,1})=f(\kappa_{j,2})=\mu_j
 \, \text{ for }\, j=1,\dots,b.
\end{equation}

To do so, we choose an affine coordinate $x$ on $\mathbb{P}^1$ such that $p=\infty$. Then we define
\[ P(x)\,:=\, \prod_{j=1}^{s} (x-p_j)^{m_j} \qquad {\rm and} \qquad 
   K(x) \,:= \,\prod_{j=1}^{a}(x-\kappa_j) \cdot \prod_{j=1}^{b}(x-\kappa_{j,1})(x-\kappa_{j,2}) .\]
   Write $K'(x)$ for the derivative of the polynomial $K(x)$. An interpolation argument shows:

\begin{lemma}\label{lemma:interpolation}
A rational function $f$ in $H^0(\mathbb{P}^1,D_0+\infty p)$ satisfies condition \eqref{eq:interpolation} if and only~if 
\[ f (x) \,\,=\,\, \frac{K(x)}{P(x)} \left[ \sum_{j=1}^{a} \lambda_j \frac{P(\kappa_{j})}{K'(\kappa_j)} \frac{1}{x-\kappa_j} + \sum_{j=1}^b \mu_j \left( \frac{P(\kappa_{j,1})}{K'(\kappa_{j,1})}\frac{1}{x-\kappa_{j,1}} +  \frac{P(\kappa_{j,2})}{K'(\kappa_{j,2})}\frac{1}{x-\kappa_{j,2}} \right) + H(x) \right]\!, \]
where $\mu_1,\ldots,\mu_b \in \mathbb{K}$ and $H(x)$ is a polynomial in $\mathbb{K}[x]$.
\end{lemma}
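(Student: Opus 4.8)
The plan is to realize the asserted formula as a Lagrange-type particular solution plus the kernel of the evaluation-at-nodes map, after first making the ambient space explicit. Since $p=\infty$ and the divisor $D_0=m_1p_1+\dots+m_sp_s+mp$ is supported away from the interpolation nodes, a rational function lies in $H^0(\mathbb{P}^1,D_0+\infty p)$ if and only if its only finite poles are among the $p_j$, of order at most $m_j$; equivalently, $f\mapsto g:=fP$ is a bijection from $H^0(\mathbb{P}^1,D_0+\infty p)$ onto $\mathbb{K}[x]$, so every such $f$ is $f=g/P$ for a unique polynomial $g$, with the pole at $\infty$ unconstrained. Under this identification the conditions \eqref{eq:interpolation} become $g(\kappa_j)=\lambda_j P(\kappa_j)$ together with the requirement that $g(\kappa_{j,l})/P(\kappa_{j,l})$ take a common value $\mu_j$ for $l=1,2$; here $P(\kappa)\neq 0$ at every node precisely because $D_0$ is supported away from the $\kappa$'s.

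Next I would verify the forward direction by evaluating the proposed $f=\frac{K}{P}\cdot[\,\cdots\,]$ at each node. Because the nodes $\kappa_1,\dots,\kappa_a,\kappa_{1,1},\dots,\kappa_{b,2}$ are distinct, $K$ has $a+2b$ simple roots there, so for a root $\kappa$ the function $\frac{K(x)}{K'(\kappa)(x-\kappa)}$ is the Lagrange basis polynomial with value $1$ at $\kappa$ and $0$ at all other nodes. Taking the limit $x\to\kappa_j$ annihilates every bracket term except the one with denominator $x-\kappa_j$ (all others are multiplied by $K(x)\to 0$), and the surviving factor $K(x)/(x-\kappa_j)\to K'(\kappa_j)$ cancels the $K'(\kappa_j)$ in the coefficient, leaving $f(\kappa_j)=\lambda_j$; the identical computation at $\kappa_{j,1}$ and $\kappa_{j,2}$ produces the common value $\mu_j$. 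Moreover each $\frac{K(x)}{x-\kappa}$ is a polynomial and $H\in\mathbb{K}[x]$, so $K$ times the bracket is a polynomial and $f=g/P$ indeed lies in $H^0(\mathbb{P}^1,D_0+\infty p)$ with no spurious poles at the nodes.

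For the converse I would subtract a particular solution. Given $f$ satisfying \eqref{eq:interpolation}, set $\mu_j:=f(\kappa_{j,1})=f(\kappa_{j,2})$ and let $f_0$ be the right-hand side with $H=0$ and these $\mu_j$; by the previous paragraph $f_0$ agrees with $f$ at all $a+2b$ nodes. The difference $f-f_0$ again lies in $H^0(\mathbb{P}^1,D_0+\infty p)$, hence equals $G/P$ for a polynomial $G$, and it vanishes at every node. Since $P(\kappa)\neq 0$ there, $G$ vanishes at each simple root of $K$, so $K\mid G$; writing $G=KH$ yields $f=f_0+\frac{K}{P}H$, which is exactly the claimed expression.

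I expect no serious obstacle here, as this is a finite Lagrange/partial-fractions interpolation argument; the only point demanding care is the bookkeeping of the parameters. The scalars $\lambda_j$ are genuinely prescribed, whereas each $\mu_j$ is the \emph{a priori} unknown common value forced by $f(\kappa_{j,1})=f(\kappa_{j,2})$, and it therefore appears as a free parameter of the solution family alongside the polynomial $H$. Maintaining the identification $\mu_j=f(\kappa_{j,l})$ consistently between the hypothesis and the parametrization is the one place where the statement could be misread.
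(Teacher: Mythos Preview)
Your proposal is correct and is precisely the standard interpolation argument the paper invokes: the paper's own proof consists solely of the phrase ``An interpolation argument shows,'' and your write-up supplies exactly those details (identify $H^0(\mathbb{P}^1,D_0+\infty p)$ with $\frac{1}{P}\mathbb{K}[x]$, check the Lagrange-basis evaluation for the forward direction, and subtract a particular solution to reduce the converse to $K\mid G$). There is nothing to add or correct.
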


Lemma \ref{lemma:interpolation} gives a way to compute the Riemann-Roch space $H^0(X,E)$ when $E$ is a divisor on a nodal rational curve $X$ as above. The normalization of such a curve is an union of projective lines. On each line we need to compute rational functions with prescribed values at 
certain points (corresponding to the intersection of two components of $X$) and at certain 
pairs of points (corresponding to the nodes in the components of $X$).

	

\begin{algorithm_thm} 
\label{alg:sec6}
The following steps compute the soliton 
\eqref{eq:solitonP}
associated to the curve data.

\begin{enumerate}
\item[\textbf{Input:}] A reducible curve $X = X_0 \cup \cdots \cup X_r$ as above,
with a smooth point $p$ and a divisor $D$ of degree $g-1$ supported also on smooth points. 
Everything is defined over~$\mathbb{K}$. 
	
\item[(1)] Let $X_0,X_0',D_0,D_0',Z$ be as in Section \ref{sec5}. Write $Z=\{q_1,\dots,q_a\}$ and 
let $n_1,\dots,n_b$ be the nodes in $X_0$. If $\nu\colon \mathbb{P}^1 \to X_0$ is the normalization of $X_0$ we set $\kappa_j := \nu^{-1}(q_j)$ and $\{ \kappa_{j,1},\kappa_{j,2} \} = \nu^{-1}(n_j)$. We also write $D_0 = m_1p_1+\dots+m_sp_s+mp$, we fix an affine coordinate $x$ on $\mathbb{P}^1$ such that $p=\infty$, and we 
compute $P(x)$ and $K(x)$ as in \eqref{eq:interpolation}.
	
\item[(2)] Compute a basis $Q_1,Q_2,\dots,Q_\ell$ of $H^0(X_0',D_0')$. If $\ell = \deg D_0'+1-p_a(X_0')$ 
then proceed. Otherwise return ``Condition ($**$) in Lemma \ref{prop:cond1and2} fails'' and terminate.
	
\item[(3)] Compute the Riemann-Roch space $H^0(X_0',D_0'-Z)$. If this is zero then proceed.
 Otherwise return ``Condition ($*$) in Lemma \ref{prop:cond1and2} fails'' and terminate.
	
	\item[(4)] Define the $\ell\times a$ matrix $A$ and the $b\times 2b$ matrix $B$ by
	\[ A_{i,j} := \frac{Q_i(p_j)P(\kappa_j)}{K'(\kappa_j)}, \qquad B_{j,2j-1} := \frac{P(\kappa_{j,1})}{K'(\kappa_{j,i})},\,\, B_{j,2j} := \frac{P(\kappa_{j,2})}{K'(\kappa_{j,2})},\,\, B_{i,j} := 0 \text{ otherwise. }   \]
	
	\item[\textbf{Output:}] The $(\ell+b)\times(a+2b)$ matrix $\begin{pmatrix} A & 0 \\ 0 & B \end{pmatrix}$. This represents the soliton solution for  the point $\iota(H^0(X,D+\infty p))$ in the Sato Grassmannian SGM, after a gauge transformation.   
\end{enumerate}
\end{algorithm_thm}

\begin{proposition}\label{prop:algorithm}
Algorithm \ref{alg:sec6} is correct.
\end{proposition}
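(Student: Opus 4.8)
The plan is to show two things: that the feasibility checks in steps (2) and (3) correctly detect conditions ($*$) and ($**$), and that the output matrix, read through the dictionary of Remark \ref{rmk:grassmaninsato}, encodes exactly the point $U = \iota(H^0(X,D+\infty p))$ of SGM. I would begin with the checks. By Riemann-Roch on the (possibly reducible) curve $X_0'$ one has $\ell = \dim H^0(X_0',D_0') = \deg D_0' + 1 - p_a(X_0') + h^1(X_0',D_0')$, so the equality tested in step (2) is equivalent to $h^1(X_0',D_0')=0$, which is condition ($**$) by Proposition \ref{prop:cond1and2}. Step (3) tests $H^0(X_0',D_0'-Z)=0$ directly, which is condition ($*$). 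Once both pass, Proposition \ref{prop:cond1and2}, via the proof of Proposition \ref{prop:smoothcurves}, guarantees $U \in \mathrm{SGM}$, so the output genuinely represents a point of the Sato Grassmannian; since that output is a matrix, it defines a point of a classical $\mathrm{Gr}(k,n)$, and the associated soliton automatically satisfies Hirota's equation by Proposition \ref{thm:HirotaGrass}. Thus correctness reduces to verifying that the matrix is the \emph{right} one.

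The heart of the argument is to exhibit an explicit frame of $U$ and match it against the block matrix. I would use the description of global sections on a nodal curve: an element of $H^0(X,D+\infty p)$ is a pair $(f_0,f_0')$ with $f_0' \in H^0(X_0',D_0')$ and $f_0$ a rational function on the normalization $\mathbb{P}^1$ of $X_0$, pole-bounded by $D_0+\infty p$, subject to the node conditions $f_0(\kappa_{j,1}) = f_0(\kappa_{j,2})$ and the gluing conditions $f_0(\kappa_j) = f_0'(q_j)$ along $Z=\{q_1,\dots,q_a\}$. Applying Lemma \ref{lemma:interpolation} with $\lambda_j = f_0'(q_j)$, the $\mu_j$ free, and $H(x)$ an arbitrary polynomial parametrizes all such $f_0$. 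This produces a frame of $U$ with three kinds of vectors: for each basis element $Q_i$ of $H^0(X_0',D_0')$ the section with $f_0'=Q_i$, $\mu=0$, $H=0$; for each node $n_j$ the section with $f_0'=0$, $\mu_j=1$, $H=0$; and the polynomials from $H(x)$, which supply the eventually-identity tail of the matrix shape (\ref{eq:xishape}).

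Next I would pass to soliton coordinates. Multiplying every frame vector by the common factor $P(x)/K(x)$ — the gauge transformation flagged in the Output, which brings the frame into the standard soliton form and produces the soliton attached to $U$ exactly as in \cite[\S 7]{Nak} — turns the first $\ell$ vectors into pure partial fractions supported on the intersection poles $\kappa_1,\dots,\kappa_a$ with coefficients $Q_i(q_j)\,P(\kappa_j)/K'(\kappa_j)$, and the next $b$ vectors into partial fractions supported on the node poles $\kappa_{j,1},\kappa_{j,2}$ with coefficients $P(\kappa_{j,\bullet})/K'(\kappa_{j,\bullet})$. In the local coordinate $z=1/x$ at $p=\infty$ one has $1/(x-\kappa_i) = z/(1-\kappa_i z)$, so up to the overall $z^{-(k-1)}$ normalization these coefficients are precisely the Grassmannian-matrix entries of Remark \ref{rmk:grassmaninsato}, with columns indexed by the $a+2b$ scalars $\kappa_\bullet$. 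Because the $Q_i$-rows are supported on the intersection poles and the node-rows on the node poles, the supports are disjoint and the matrix takes exactly the block form $\bigl(\begin{smallmatrix} A & 0 \\ 0 & B \end{smallmatrix}\bigr)$ returned in step (4).

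The main obstacle is the bookkeeping in this conversion. One must verify that the chosen $\ell+b$ sections together with the polynomials $H(x)$ form a genuine frame — linearly independent, spanning $U$, and of the correct shape (\ref{eq:xishape}) — and that the gauge factor $P/K$ is applied so that the partial-fraction coefficients land in the positions predicted by Remark \ref{rmk:grassmaninsato}. The dimension count $\dim U \cap V_n = n+1$ for $n\gg 0$ from (\ref{eq:RR}) is what certifies that no further constraints are hidden and that the polynomial tail is exactly the trivial part of the frame; checking this, together with tracking the change of coordinate $z=1/x$, is the one genuinely delicate point, after which the identification of the entries is a direct reading of Lemma \ref{lemma:interpolation}.
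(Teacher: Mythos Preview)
Your proposal is correct and follows essentially the same approach as the paper: both use Riemann--Roch on $X_0'$ to identify the check in step~(2) with condition~($**$), note that step~(3) is literally condition~($*$), describe sections of $H^0(X,D+\infty p)$ as pairs $(f_0,f_0')$ subject to the gluing and node constraints, invoke Lemma~\ref{lemma:interpolation} to parametrize the $X_0$-part, and then appeal to Remark~\ref{rmk:grassmaninsato} to read off the soliton matrix after the gauge transformation by $P/K$. The paper's proof is terser and leaves the explicit frame decomposition and the partial-fraction bookkeeping implicit, whereas you spell these out, but the structure and the key inputs are the same.
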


\begin{proof}
By Riemann-Roch, we have $h^0(X_0',D_0') = h^1(X_0',D_0')+ \deg D_0'+1-p_a(X_0')$. Hence condition ($**$) in Proposition \ref{prop:cond1and2} is satisfied if and only if the condition in step (2) of
Algorithm \ref{alg:sec6} is satisfied. Moreover, condition ($*$) in Proposition \ref{prop:cond1and2} 
is precisely the condition in step (3). Hence, we need to show that the output of the algorithm corresponds to $\iota(H^0(X,D+\infty p))$, after a gauge transformation. However, we know that any element of $H^0(X,D+\infty p)$ can be written as $(f,\sum_{j} \lambda_j Q_j)$ such that $f\in H^0(X_0,D_0+\infty p)$ and
	\[ f(\kappa_j) = \sum_i \lambda_i Q_i(\kappa_j),\text{ for  } j=1,\ldots,a 
	\qquad {\rm and} \qquad f(\kappa_{j,1}) = f(\kappa_{j,2}) \text{ for  } j=1,\ldots,b .\] 
	At this point, Lemma \ref{lemma:interpolation} gives us a basis of $\iota(H^0(X,D+\infty p))$. Remark \ref{rmk:grassmaninsato} shows that this corresponds exactly to the matrix given by the algorithm, after a gauge transformation.
\end{proof}


We illustrate the algorithm in the following examples.

\begin{example} Let $X$ be an irreducible rational curve with $g$ nodes.
Algorithm \ref{alg:sec6} returns a matrix $B$ for
	a $(g,2g)$-soliton.
	This is consistent with  (\ref{eq:dreisechs}). 
	Note that $X$ is a tropical limit where
	the graph is one node with $g$ loops and the Delaunay polytope is the $g$-cube.
\end{example}

\begin{example}[$g=2$]
Let $X$ be the union of two smooth rational curves $X_0,X_1$ meeting at three points $Z=\{q_1,q_2,q_3\}$. 
This curve is the special fiber of the genus $2$ curve $\{y^2=f_2(x)\}$ in Example~\ref{ex:ClaudiaYelena}.
It corresponds to the graph on the right in Figure \ref{graphs}.
We choose a smooth point $p\in X_0 \backslash Z$, and we consider three different divisors of degree one: $p$, $-2q +3p $ and $3q - 2p$, where $q$ is a smooth point in $X_1$.
We apply Algorithm \ref{alg:sec6} to these three instances.
	
	\begin{itemize}
		\item Take $D = p$. Then $H^0(X_0',D_0') = H^0(\mathbb{P}^1,\mathcal{O}) = \mathbb{K}$ has the constant function $1$ as basis. The conditions in steps (2) and (3) are both satisfied, and
 the algorithm gives us the soliton solution corresponding to the matrix
$\,A = \begin{pmatrix} \frac{1}{K'(\kappa_1)} & \frac{1}{K'(\kappa_2)} & \frac{1}{K'(\kappa_3)} \end{pmatrix} $.
Note that the Delaunay polytope $\mathcal{C}$ is the triangle, and the approach in
Section~\ref{sec3} leads to the
gauge-equivalent matrix	$A = \begin{pmatrix} 1 & 1 & 1 \end{pmatrix} $.
This also arises for $z_1=0$ in Example \ref{ex:triangprism}.
		\item Take $D_2 = -2q+3p$. Then $H^0(X_0',D_0') \cong H^0(\mathbb{P}^1,-2q) = 0$ and the condition in step (2) is not satisfied. Hence we do not get a point in the Sato Grassmannian.
		
		\item Take $D_3 = 3q-2p$. Then $H^0(X_0',D_0') \cong H^0(\mathbb{P}^1,3q)$ has dimension $4$ and the condition in step (2) is satisfied. However $H^0(X_0',D_0'-Z) \cong H^0(\mathbb{P}^1,\mathcal{O}) \ne 0$ so the condition in step (3) is not satisfied, and we do not get a point in the Sato Grassmannian.
	\end{itemize}
\end{example}

\begin{example}[$g=3$]
Consider four general lines $X=X_0\cup X_1\cup X_2\cup X_3$ in $\PP^2$.
Set $X_0\cap X_i = {\kappa_i}$ and $X_i\cap X_j = q_{ij}$ for $i,j \in \{ 1,2,3 \}$. 
We fix the divisor $D=p_1+p_2+p_3-p$, for 
general points $p \in X_0$ and $p_i\in X_i$ for $i=1,2,3$.
After the preperatory set-up in step (1), we
compute  $H^0(X_0',D_0')$ in step (2). This is the space of
functions $(g_1,g_2,g_3)$ in $\bigoplus_{i=1}^3 H^0(X_i,p_i)$ such that
	$g_i(q_{ij}) = g_j(q_{ij})\,\, \text{ for } i,j \in \{ 1,2,3 \}$.
	Choose affine coordinates $x_i$ on $X_i$ for $i=1,2,3$ such that $p_i = \infty$. 
We compute the following basis for  $H^0(X_0',D_0')$:
	\begin{equation*} \begin{matrix}
	Q_1 = \left( 0, \frac{x_2-q_{12}}{q_{23}-q_{12}},  \frac{x_3-q_{13}}{q_{23}-q_{12}} \right), \,\,
	Q_2 = \left(  \frac{x_1-q_{12}}{q_{13}-q_{12}},  0,  \frac{x_3-q_{23}}{q_{13}-q_{23}} \right), \,\,
	Q_3 = \left(  \frac{x_1-q_{13}}{q_{12}-q_{13}},  \frac{x_2-q_{23}}{q_{12}-q_{13}},  0 \right).
	\end{matrix}
	\end{equation*}
Hence $\ell=3$ and the condition in step (2) holds.	We also find that
         $H^0(X_0',D_0'-Z)=0$, so that the condition in step (3) is satisfied as well.
        Algorithm  \ref{alg:sec6}  outputs the soliton matrix
\begin{equation}
\label{eq:threebythree}
	A \,\,\, = \,\,\,
	\begin{pmatrix}
	0 & \frac{\kappa_1-q_{12}}{q_{13}-q_{12}}\frac{1}{K'(\kappa_1)} & \frac{\kappa_1-q_{13}}{q_{12}-q_{13}} \frac{1}{K'(\kappa_1)} \\
	\frac{\kappa_2-q_{12}}{q_{23}-q_{12}} \frac{1}{K'(\kappa_2)} & 0 & \frac{\kappa_2-q_{23}}{q_{12}-q_{13}}\frac{1}{K'(\kappa_2)} \\
	\frac{\kappa_3-q_{13}}{q_{23}-q_{12}} \frac{1}{K'(\kappa_3)} &  \frac{\kappa_3-q_{23}}{q_{13}-q_{23}}\frac{1}{K'(\kappa_3)}  & 0
	\end{pmatrix}.
\end{equation}
The curve $X$ is the last one in \cite[Figure 2]{struwe}.
The Delaunay polytope $\mathcal{C}$ is a tetrahedron, so 
Theorem \ref{thm:khovanskii} applies.
It would be desirable to better understand the relationship between the soliton solution (\ref{eq:threebythree}),
the Hirota variety $\mathcal{H}_\mathcal{C}$, and
the Dubrovin variety in \cite[Example 6.2]{agostini&co}.
	\end{example}

We end with a few words of conclusion. There are two ways to obtain a tau function from a smooth curve: via the theta function and the Dubrovin threefold as in \cite{agostini&co}, or via the Sato Grassmannian as in Section \ref{sec5}. In this paper we presented a parallel for tropical limits of smooth curves: namely, the Delaunay polytopes and Hirota varieties of Sections \ref{sec1}--\ref{sec3}, or the
Sato Grassmannian as in Section \ref{sec6}.
Our next goal is to better understand this process in families. 
An essential step
is to clarify the relation between the degeneration of theta functions
via ${\bf a} \in \mathcal{C}$ as in \eqref{eq:thetaepsilon} and the choice of the divisor $D$ and 
 point $p$ in Algorithm \ref{alg:sec6}.

\newpage

\thebibliography{23}
\begin{footnotesize}

\setlength{\itemsep}{-0.6mm}

\bibitem{AG} A.~Abenda and P.~Grinevich:
{\em Rational degenerations of M-curves,
totally positive Grassmannians and KP2-solitons},
Communications in Math.~Physics
{\bf 361} (2018) 1029--1081.

\bibitem{struwe}
D.~Agostini, T.~{\c{C}}elik {\"O}zl{\"u}m, J.~Struwe and B. Sturmfels:
{\em Theta surfaces}, 
Vietnam J.~Math.~(2021).

 \bibitem{agostini&co} D.~Agostini, T.~{\c{C}}elik {\"O}zl{\"u}m and B. Sturmfels: {\em The Dubrovin threefold of an algebraic curve}, Nonlinearity (to appear), {\tt arXiv:2005.08244}.
 
 \bibitem{bolognese} B.~Bolognese, M.~Brandt and L.~Chua: {\em From curves to tropical Jacobians and back}, Combinatorial algebraic geometry (2017) pp. 21-45. Springer, New York, NY.
 
 \bibitem{brandt} M.~Brandt:
 {\em Tropical Geometry of Curves},
 PhD dissertation, UC Berkeley, 2020.
 
 \bibitem{CKS}
 L.~Chua, M.~Kummer and B.~Sturmfels: {\em
  Schottky algorithms: classical meets tropical},
    Mathematics of Computation {\bf 88} (2019) 2541--2558.

\bibitem{DE}
J.~Draisma and R.~Eggermont:
{\em Pl\"ucker varieties and higher secants of Sato's Grassmannian},
J.~Reine Angew.~Math. {\bf 737} (2018), 189--215. 

 \bibitem{dubrovin} B.~Dubrovin:~{\em Theta functions and non-linear 
equations},  Russian Math.~Surveys {\bf 36} (1981) 11-92.
 
 \bibitem{dutour} M.~Dutour: {\em The six-dimensional
 Delaunay polytopes},
 European~J.~Comb.~{\bf 25} (2004) 535-548.

\bibitem{erdahl} R.M.~Erdahl and S.S.~Ryshkov:
{\em The empty sphere}, Canadian J.~Math. {\bf 39} (1987) 794--824.

 \bibitem{M2} D.~Grayson and M.~Stillman:
{\em  Macaulay2, a software system for research in algebraic geometry},
available at \url{http://www.math.uiuc.edu/Macaulay2/}.

\bibitem{hess} F.~Hess: {\em Computing Riemann-Roch spaces in algebraic
function fields and related topics},
J.~Symbolic Computation {\bf 33} (2002) 425--445.

\bibitem{jell} P.~Jell: {\em Constructing smooth and fully faithful tropicalizations for Mumford curves},
Selecta Mathematica {\bf 26} (2021), Paper No  60, 23 pp.

\bibitem{KM}
K.~Kaveh and C.~Manon: {\em Khovanskii bases, higher rank valuations, and tropical geometry},
SIAM J.~Appl.~Algebra Geom.~{\bf 3} (2019) 292--336. 
 
\bibitem{Kodamabook}  Y.~Kodama: {\em KP Solitons and the Grassmannians. Combinatorics and    
Geometry and Two-dimensional Wave Patterns},
Briefs in Mathematical Physics {\bf 22}, Springer,  2017.

\bibitem{KX} Y.~Kodama and Y.~Xie: {\em Space curves and solitons of the KP hierarchy:
I.~The $l$-th generalized KdV hierarchy},  
 SIGMA Symmetry Integrability Geom.~Methods Appl.~{\bf 17} (2021), 024, 43 pages.

\bibitem{Krichever}  I. M. Krichever: {\em Methods  of  algebraic  geometry  in  the  theory  of  nonlinear  equations}, Russian Mathematical Surveys {\bf 32} (1977) 185--214.

\bibitem{INLA} M.~Micha{\l}ek and B.~Sturmfels:
{\em Invitation to Nonlinear Algebra},
    Graduate Studies in Mathematics, Vol 211,
 American Mathematical Society, Providence, 2021.
      
\bibitem{Naktrig} A.~Nakayashiki: {\em Degeneration of trigonal curves and solutions of the KP-hierarchy}, Nonlinearity ~{\bf 31} (2018) 3567--3590.
      
 \bibitem{Nak} A.~Nakayashiki: {\em On reducible degeneration of hyperelliptic curves and soliton solutions},
 SIGMA Symmetry Integrability Geom. Methods Appl.~{\bf 15} (2019), Paper No. 009, 18 pp.
  
 \bibitem{Sato} M. Sato: {\em Soliton equations as dynamical systems on infinite dimensional Grassmann manifold}, Nonlinear Partial Differential Equations in Applied Science, North-Holland, 1982, pp. 259--271. 
 
  
 \bibitem{Stanley} R.~Stanley:
 {\em Enumerative Combinatorics: Volume 2}, Cambridge University Press, 2001
 
\bibitem{gbcp} B.~Sturmfels:
{\em Gr\"obner Bases and Convex Polytopes},
American Mathematical Society, University Lectures Series, No 8, Providence, Rhode Island, 1996. 
\bigskip \bigskip  \bigskip

\noindent

\noindent Daniele Agostini,  MPI-MiS Leipzig,
\hfill  {\tt daniele.agostini@mis.mpg.de}
\vspace{-4pt}

\noindent Claudia Fevola,  MPI-MiS Leipzig,
\hfill  {\tt claudia.fevola@mis.mpg.de}
\vspace{-4pt}

\noindent Yelena Mandelshtam, UC Berkeley,
\hfill  {\tt yelenam@berkeley.edu}
\vspace{-4pt}

\noindent Bernd Sturmfels,
MPI-MiS Leipzig and UC Berkeley,
\hfill {\tt bernd@mis.mpg.de}
\vspace{-4pt}

\end{footnotesize}

 \end{document}